\newtheorem{thm}{Theorem}[section]
\newtheorem{cor}[thm]{Corollary}
\newtheorem{lemma}[thm]{Lemma}
\newtheorem{prop}[thm]{Proposition}
\theoremstyle{definition}
\newtheorem{remark}[thm]{Remark}
\numberwithin{equation}{section}
\newcommand{\R}{\mathbb R}
\newcommand{\N}{\mathbb N}
\newcommand{\C}{\mathbb C}
\newcommand{\Z}{\mathbb Z}
\newcommand{\al}{\alpha}
\newcommand{\ga}{\gamma}
\newcommand{\de}{\delta}
\newcommand{\eps}{\varepsilon}
\newcommand{\si}{\sigma}
\newcommand{\te}{\theta}
\newcommand{\rphis}[5]{\,_{#1}\varphi_{#2}\!\left( \genfrac{.}{.}{0pt}{}{#3}{#4}
\,;#5 \right)}
\newcommand{\Res}[1]{\underset{#1}{\mathrm{Res}}}
\newcommand{\mhyphen}{\text{--}}
\begin{document}
\title{Orthogonality relations for Al-Salam--Carlitz polynomials of type II}
\author{Wolter Groenevelt}
\address{Technische Universiteit Delft, DIAM, PO Box 5031,
2600 GA Delft, the Netherlands}
\email{w.g.m.groenevelt@tudelft.nl}

\begin{abstract}
Using a special case of Askey's $q$-beta integral evaluation formula, we determine orthogonality relations for the Al-Salam--Carlitz polynomials of type II with respect to a family of measures supported on a discrete subset of $\mathbb R$. From spectral analysis of the corresponding second-order $q$-difference operator we obtain an infinite set of functions that complement the Al-Salam--Carlitz II polynomials to an orthogonal basis of the associated $L^2$-space.
\end{abstract}

\maketitle

\section{Introduction}
To an explicit evaluation of a beta-type integral one can often associate orthogonal polynomials. The main example of this is, of course, Euler's beta integral, which has Jacobi polynomials as corresponding orthogonal polynomials. In this paper we consider orthogonal polynomials corresponding to a special case of Askey's $q$-beta integral \cite{Ask}
\begin{equation} \label{eq:Askey-beta-integral}
\int_{\R_q} \frac{ (ax,bx;q)_\infty }{ (cx,dx;q)_\infty } d_q x = (1-q)z_+\frac{  (q,a/c,a/d,b/c,b/d;q)_\infty \te(z_-/z_+, cdz_-z_+;q) }{ (ab/cdq;q)_\infty \te(cz_-, dz_-, cz_+, dz_+;q)_\infty },
\end{equation}
where $|ab|<|cdq|$. Let us first explain some notations.

Throughout the paper we assume $q\in (0,1)$, and we use notations for $q$-shifted factorials, theta-functions and $q$-hypergeometric functions as in \cite{GR}. Furthermore, for parameters $z_-<0$ and $z_+>0$ we set
\[
\R_q = \R_q(z_-,z_+) = z_-q^\Z \cup z_+ q^\Z,
\]
which we consider as a $q$-analog of the real line. The Jackson $q$-integral over $\R_q$ is defined by
\[
\int_{\R_q} f(x)\, d_q x = (1-q) \sum_{k=-\infty}^\infty\Big( f(z_+ q^k) z_+q^k  - f(z_-q^k) z_- q^k \Big),
\]
for any function $f$ on $\R_q$ for which the sum converges absolutely.
The evaluation of the $q$-beta integral \eqref{eq:Askey-beta-integral} is equivalent to a summation formula involving two $_2\psi_2$-functions, see \cite[Exercise 5.10]{GR} (which contains a misprint).

Assume $\frac{ (ax,bx;q)_\infty }{ (cx,dx;q)_\infty }>0$ for all $x \in \R_q$. The discrete measure in the $q$-beta integral above has only finitely many moments, so there are only finitely many corresponding orthogonal polynomials, which are big $q$-Jacobi polynomials, see \cite{Groen}. The orthogonal polynomials corresponding to the special case $a=0$ are $q$-Meixner polynomials, see \cite{GroenK}, but the support of the measure has to be restricted to $[-qb,\infty)$ (assuming $b>0$). In this paper we consider the special case with $a=b=0$ of \eqref{eq:Askey-beta-integral};
\begin{equation} \label{eq:beta-integral}
\int_{\R_q} \frac{1}{(cx,dx;q)_\infty} d_qx = (1-q)z_+\frac{(q;q)_\infty \te(z_-/z_+,cdz_-z_+;q) }{\te(cz_-,dz_-,cz_+,dz_+;q)}.
\end{equation}
In this case the discrete measure has infinitely many moments, so there exists a set $\{P_n\}_{n\in \N}$ of corresponding orthogonal polynomials.

It is not difficult to determine explicitly the polynomials $P_n$. We denote the right hand side of \eqref{eq:beta-integral} by $B(c,d;z_-,z_+)$. The natural `monomials' in this case are $(cx;q)_m$ and $(dx;q)_m$, and the `moments' corresponding to $(cx;q)_k (dx;q)_m$ are
\[
B(cq^k,dq^m;z_-,z_+) = \left(-\frac{cq^{-m}}{d}\right)^k q^{\frac12k(k-1)} B(c,dq^m;z_-,z_+),
\]
where the latter expression follows from the $\te$-product identity
\begin{equation} \label{eq:te-product}
\te(xq^k;q) = (-x)^{-k} q^{-\frac12 k(k-1)} \te(x;q).
\end{equation}
An application of the $q$-binomial theorem \cite[(II.4)]{GR} then gives us
\[
\begin{split}
\int_{\R_q}  &\rphis{2}{0}{q^{-n},cx}{\mhyphen}{q,\frac{dq^n}{c}} (dx;q)_m \frac{1}{(cx,dx;q)_\infty} d_qx \\
&= \sum_{k=0}^n B(cq^k,dq^m;z_-,z_+) \frac{ (q^{-n};q)_k }{(q;q)_k } \left(-\frac{dq^n}{c}\right)^k q^{-\frac12 k(k-1)} \\
& = B(c,dq^m;z_-,z_+) \sum_{k=0}^n \frac{ (q^{-n};q)_k }{(q;q)_k } q^{(n-m)k} \\
&= B(c,dq^m;z_-,z_+) (q^{-m};q)_n,
\end{split}
\]
from which we see that$\rphis{2}{0}{q^{-n},cx}{\mhyphen}{q,\frac{dq^n}{c}}$, which is a polynomials in $x$ of degree $n$, is orthogonal to all polynomials of degree lower than $n$. A comparison with the orthogonal polynomials in the $q$-Askey-scheme \cite{KLS} shows that the polynomial is an Al-Salam--Carlitz polynomials of type II. These polynomials, introduced by Al-Salam and Carlitz in \cite{ASC}, are given by
\begin{equation} \label{eq:ASCIIpol}
V_n^{(a)}(z;q) = (-a)^n q^{-\frac12 n(n-1)} \rphis{2}{0}{q^{-n},z}{-}{q,\frac{q^n}{a}}.
\end{equation}
We set
\begin{equation} \label{eq:Pn}
P_n(x) = d^n V_n^{(c/d)}(cx;q) = (-c)^n q^{-\frac12 n(n-1)} \rphis{2}{0}{q^{-n},cx}{-}{q,\frac{dq^n}{c}},
\qquad n \in \N,
\end{equation}
then $P_n$ is symmetric in $c$ and $d$ by a limit case of one of Heine's $_2\varphi_1$-transformations (see also Remark \ref{rem:c<->d} later on). Using the symmetry in $c$ and $d$, the orthogonality relations we obtained above are equivalent to the following relations for $P_n$.
\begin{thm} \label{thm:orthogonalitypolynomials}
For $n,m \in \N$,
\begin{equation} \label{eq:orthonalityP}
\begin{split}
\int_{\R_q} P_m(x) P_n(x) \frac{1}{(cx,dx;q)_\infty }\,d_q x &=  \delta_{mn} B(c,d;z_-,z_+) (q;q)_n (cd)^n q^{-n^2},
\end{split}
\end{equation}
with
\[
B(c,d;z_-,z_+)=(1-q)z_+\frac{(q;q)_\infty \te(z_-/z_+,cdz_-z_+;q) }{\te(cz_-,dz_-,cz_+,dz_+;q)}.
\]
\end{thm}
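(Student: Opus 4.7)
The plan is to capitalize on the computation already displayed in the excerpt. That computation gives, for each $m\in\N$,
\[
\int_{\R_q} \rphis{2}{0}{q^{-n},cx}{-}{q,dq^n/c} (dx;q)_m \frac{d_qx}{(cx,dx;q)_\infty} = B(c,dq^m;z_-,z_+)\,(q^{-m};q)_n,
\]
and the right-hand side vanishes whenever $m<n$ because the factor $1-q^{0}$ occurs in $(q^{-m};q)_n$. Since $\{(dx;q)_m\}_{0\le m\le n-1}$ is a basis for the space of polynomials of degree less than $n$, this already shows that $P_n$ (a scalar multiple of the $_2\varphi_0$ above) is orthogonal to every polynomial of strictly lower degree, and in particular $\langle P_n,P_m\rangle = 0$ whenever $m\ne n$.

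For the diagonal case $m=n$, the plan is to pair $P_n$ in one slot of the integral with the $c\leftrightarrow d$ form of $P_n$ in the other, namely
\[
P_n(x) = (-d)^n q^{-n(n-1)/2}\rphis{2}{0}{q^{-n},dx}{-}{q,cq^n/d}.
\]
Expanding this $_2\varphi_0$ as a linear combination of the $(dx;q)_k$ (keeping in mind that the $_2\varphi_0$ series carries the weight $(-1)^k q^{-k(k-1)/2}$ per term) and substituting into the displayed integral formula, only the term $k=n$ survives by the vanishing argument above. What remains is the product of the prefactors of the two copies of $P_n$, the $k=n$ expansion coefficient $\frac{(q^{-n};q)_n}{(q;q)_n}(-cq^n/d)^n q^{-n(n-1)/2}$, and the factor $B(c,dq^n;z_-,z_+)(q^{-n};q)_n$ coming from the integral.

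The last step is to convert $B(c,dq^n;z_-,z_+)$ into $B(c,d;z_-,z_+)$ by applying the $\theta$-product identity \eqref{eq:te-product} to each of the three theta factors that change under $d\mapsto dq^n$, namely $\te(cdz_-z_+;q)$, $\te(dz_-;q)$, and $\te(dz_+;q)$; the shifts combine to the tidy ratio $(-1)^{n}(d/c)^{n}q^{n(n-1)/2}$. The step I expect to require the most care is this final collection of factors: the signs and $q$-powers arising from (i) the $(-1)^k q^{-k(k-1)/2}$ weight of the $_2\varphi_0$, (ii) the explicit evaluation $(q^{-n};q)_n = (-1)^{n}q^{-n(n+1)/2}(q;q)_n$, and (iii) the theta-shift ratio just computed must all combine so that the signs cancel and the exponents of $q$ collapse to exactly $-n^2$, producing $(cd)^n q^{-n^2}(q;q)_n B(c,d;z_-,z_+)$ as required.
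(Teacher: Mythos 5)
Your proposal is correct and follows essentially the same route as the paper: the paper establishes Theorem \ref{thm:orthogonalitypolynomials} precisely from the displayed integral formula against the monomials $(dx;q)_m$ together with the $c\leftrightarrow d$ symmetry of $P_n$, which is exactly your argument. Your explicit bookkeeping for the diagonal case (the $k=n$ coefficient, $(q^{-n};q)_n=(-1)^nq^{-n(n+1)/2}(q;q)_n$, and the theta-shift ratio $(-1)^n(d/c)^nq^{n(n-1)/2}$) checks out and indeed collapses to $(q;q)_n(cd)^nq^{-n^2}B(c,d;z_-,z_+)$.
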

This theorem contains as a special case orthogonality relations for discrete $q$-Hermite polynomials of type II, see Remark \ref{rem:qHermite} later on.

It is well known, see e.g.~\cite{BV,Ch}, that the Al-Salam--Carlitz II polynomials correspond to an indeterminate moment problem. So we just obtained a family, labeled by $z_-$ and $z_+$, of solutions to this moment problem. It turns out the polynomials are not dense in the $L^2$-space associated to \eqref{eq:beta-integral}, so the solutions we just found are not $N$-extremal. It remains then to find a set of functions to complete $\{P_n\}_{n \in \N}$ to an orthogonal basis. We will determine these complementary functions using spectral analysis of the second order $q$-difference operator of which the Al-Salam--Carlitz II polynomials are eigenfunctions. This method, i.e., studying a specific moment problem using spectral analysis of the $q$-difference operator corresponding to the orthogonal polynomials, has successfully been applied in e.g.~\cite{CiccKK},\cite{ChriK},\cite{GroenK}.

Our main motivation for studying the polynomials $P_n$ and the complementary functions (they are denoted by $Q_n$ later on) comes from representation theory of the $\mathrm{SU}_q(2)$ quantum group. In \cite{Groen2} we compute coupling coefficients between two eigenvectors of a special element $\rho_{\tau,\si} \in \mathrm{SU}_q(2)$, which may be considered as a sort of Casimir element, in certain infinite dimensional representations of $\mathrm{SU}_q(2)$. The coupling coefficients turn out to be $2\times2$-matrix-valued orthogonal functions. The functions $P_n$ and $Q_n$ appear in this setting as matrix coefficients of the coupling coefficients.

The outline of the paper is the following. In Section \ref{sect:1phi1} we give a few transformation formulas for $_1\varphi_1$-functions that we need later on in the paper. In Section \ref{sect:orthogonality on R} we perform the spectral analysis of the second order $q$-difference operator $L$ for the Al-Salam--Carlitz II polynomials. In \S\ref{ssec:L} we give the precise definition of $L$, the Hilbert space $\mathcal H$ it is defined on as an unbounded operator, and we show that $L$ extends to a densely defined self-adjoint operator. In \S\ref{ssec:eigenfunctions} we obtain sufficiently many eigenfunctions of $L$, which are given in terms of $_1\varphi_1$-functions. With the eigenfunctions we determine in \S\ref{sect:spectraldecomposition} the spectral decomposition of $L$, which in \S\ref{ssec:orthogonality} leads to an orthogonal basis for $\mathcal H$ in terms of the polynomials $P_n$ and complementary functions $Q_n$, see Theorem \ref{thm:orthogonalityPQ}. Finally, in Section \ref{sect:classicorthogonality} we show that the well-known orthogonality relations for the Al-Salam--Carlitz II polynomials from \cite{ASC} can, in a certain sense, be considered as limit case of \eqref{eq:orthonalityP}.

\section{The $_1\varphi_1$-function} \label{sect:1phi1}
In this paper we mainly use the confluent $q$-hypergeometric function $_{1}\varphi_{1}(a;b;q,z)$, but we will see that this function also appears as a $_2\varphi_1$-function or as a $_2\varphi_0$-function. The function
\[
(b;q)_\infty \rphis{1}{1}{a}{b}{q,z} = \sum_{n=0}^\infty  (a;q)_n (bq^n;q)_\infty (-1)^n q^{\frac12n(n-1)} z^n,
\]
is an entire function in $a$, $b$, and $z$. We collect a few transformation formulas for the $_1\varphi_1$-function. All formulas can be obtained from applying well-known transformation formulas for  $_{2}\varphi_{1}(A,B;C;q,Z)$ with one of the parameters equal to zero.
\begin{lemma} \label{lem:1phi1-transforms}
The following transformation formulas hold:
\begin{align}
\rphis{1}{1}{a}{b}{q,z} & = \frac{ (b/a;q)_\infty }{(b;q)_\infty } \rphis{2}{1}{a,az/b}{0}{q,\frac{b}{a}} \label{eq:transform1}\\
& = (az/b;q)_\infty \rphis{2}{1}{0,b/a}{b}{q,\frac{az}{b}} \label{eq:transform2}\\
& = \frac{(a,z;q)_\infty}{(b;q)_\infty} \rphis{2}{1}{0,b/a}{z}{q,a} \label{eq:transform3}\\
& = \frac{(z;q)_\infty}{(b;q)_\infty} \rphis{1}{1}{az/b}{z}{q,b}. \label{eq:transform4}
\end{align}
\end{lemma}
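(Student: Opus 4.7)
The plan is to derive each identity from one of Heine's standard $_2\varphi_1$ transformation formulas in \cite{GR}, exploiting the confluent limit
\[
\lim_{B \to \infty} \rphis{2}{1}{a,B}{b}{q,z/B} = \rphis{1}{1}{a}{b}{q,z},
\]
which follows from $(B;q)_n (z/B)^n \to (-z)^n q^{\binom{n}{2}}$ as $B\to\infty$.

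For \eqref{eq:transform2} and \eqref{eq:transform4}, I would apply Heine's third and second transformations, respectively, to the $_2\varphi_1(a,B;b;q,z/B)$ and let $B \to \infty$. In the case leading to \eqref{eq:transform2} the transformed series acquires a $b/B$ parameter that limits to $0$, producing the $_2\varphi_1(0, b/a; b; q, az/b)$ on the right-hand side, while the prefactor tends to $(az/b;q)_\infty$. In the case leading to \eqref{eq:transform4} the transformed series retains the large parameter $B$ but collapses again, by the same confluent limit, to the $_1\varphi_1(az/b; z; q, b)$ on the right-hand side, and the prefactor simplifies to $(z;q)_\infty/(b;q)_\infty$. (Applying Heine's first transformation directly in this way is problematic because the factor $(B;q)_\infty$ diverges, so \eqref{eq:transform1} requires an indirect route.)

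For \eqref{eq:transform3}, I would substitute \eqref{eq:transform2} into the right-hand side of \eqref{eq:transform4} after the parameter matching $(a,b,z) \mapsto (az/b, z, b)$; the product of $q$-Pochhammer prefactors telescopes to $(a,z;q)_\infty/(b;q)_\infty$ and the argument of the inner $_2\varphi_1$ becomes $a$. For \eqref{eq:transform1}, I would apply Heine's first transformation to the $_2\varphi_1(0, b/a; b; q, az/b)$ sitting on the right of \eqref{eq:transform2}: the specialization $(A,B,C,Z) = (0, b/a, b, az/b)$ gives $AZ = 0$ and $C/B = a$, so the transformation outputs a $_2\varphi_1(a, az/b; 0; q, b/a)$ of the desired shape with prefactor $(b/a;q)_\infty/(b, az/b;q)_\infty$; substituting back into \eqref{eq:transform2} cancels the outer factor $(az/b;q)_\infty$ and leaves exactly \eqref{eq:transform1}.

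Each step is routine bookkeeping once the specializations are chosen. The only real subtlety is justifying the $B\to\infty$ limits termwise inside the series, which is standard and consistent with the entireness of $(b;q)_\infty\,\rphis{1}{1}{a}{b}{q,z}$ noted just before the lemma.
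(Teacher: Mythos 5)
Your proposal is correct, but it follows a different path through the Gasper--Rahman toolkit than the paper does. The paper avoids limits altogether: it obtains \eqref{eq:transform1} in one stroke by applying \cite[(III.4)]{GR} to the $_2\varphi_1$ with zero lower parameter (with $C=0$ the resulting $_2\varphi_2$ collapses to the $_1\varphi_1$), then gets \eqref{eq:transform2} and \eqref{eq:transform3} by applying Heine's transformation \cite[(III.1)]{GR} to that same $_2\varphi_1$ with its two numerator parameters taken in either order, and finally obtains \eqref{eq:transform4} by chaining \eqref{eq:transform2} and \eqref{eq:transform3}. You instead never touch (III.4): you generate \eqref{eq:transform2} and \eqref{eq:transform4} as confluent $B\to\infty$ limits of \cite[(III.3)]{GR} and \cite[(III.2)]{GR} applied to $_2\varphi_1(a,B;b;q,z/B)$, then chain them for \eqref{eq:transform3}, and recover \eqref{eq:transform1} by applying (III.1) to the right-hand side of \eqref{eq:transform2} -- I checked the parameter bookkeeping in each step ($AZ=0$, $C/B=a$, the cancellation of $(az/b;q)_\infty$, and $a'z'/b'=a$, $b'/a'=b/a$ in the \eqref{eq:transform4}$\to$\eqref{eq:transform3} substitution) and it all comes out right, with no circularity in the order of derivation. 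What the paper's route buys is that every step is a direct specialization with a parameter equal to zero, so no interchange of limit and summation needs justifying; what your route buys is that it stays entirely within the three Heine transformations (III.1)--(III.3) at the cost of the termwise $B\to\infty$ argument, which you correctly flag and which is easily handled by dominated convergence since $|(B;q)_n(z/B)^n|\le |z|^n\prod_{j\ge 0}(1+q^j)$ for $|B|\ge 1$. Your parenthetical warning that applying (III.1) directly to $_2\varphi_1(a,B;b;q,z/B)$ fails (the transformed series would have argument $B$) is also accurate; as with the paper's proof, the identities are first obtained on the natural convergence domains and then extended by analytic continuation, both sides being built from entire or obviously analytic pieces.
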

\begin{proof}
Formula \eqref{eq:transform1} follows from \cite[(III.4)]{GR} with $(A,B,C,Z) = (a,az/c,0,c/a)$. To the $_2\varphi_1$-function we obtain in this way we apply \cite[(III.1)]{GR} with $(A,B,C,Z) = (a,az/c,0,c/a)$ and $(A,B,C,Z) = (az/c,a,0,c/a)$ to find \eqref{eq:transform2} and \eqref{eq:transform3}, respectively. Combining \eqref{eq:transform2} and $\eqref{eq:transform3}$ gives \eqref{eq:transform4}.
\end{proof}
We also need a three-term transformation.
\begin{lemma}
The following three-term transformation formula holds:
\begin{multline} \label{eq:transform5}
\frac{(az;q)_\infty}{(z;q)_\infty}\rphis{1}{1}{a}{az}{q,bz} =\\ \frac{ (b;q)_\infty \te(az;q) }{(b/a;q)_\infty \te(z;q) } \rphis{1}{1}{a}{aq/b}{q,\frac{q^2}{bz}} + \frac{ (a;q)_\infty \te(bz;q) }{(a/b;q)_\infty \te(z;q) } \rphis{1}{1}{b}{bq/a}{q,\frac{q^2}{az}},
\end{multline}
\end{lemma}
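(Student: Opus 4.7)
My plan is to reduce \eqref{eq:transform5} to the three-term transformation for $_2\varphi_1$-series from \cite{GR} via the $_1\varphi_1 \to {}_2\varphi_1$ passage provided by Lemma \ref{lem:1phi1-transforms}. Applying \eqref{eq:transform1} with its parameters $(b,z)$ replaced by $(az,bz)$ yields
\[
\frac{(az;q)_\infty}{(z;q)_\infty}\rphis{1}{1}{a}{az}{q,bz} = \rphis{2}{1}{a, b}{0}{q, z},
\]
so the left-hand side of \eqref{eq:transform5} is simply $\rphis{2}{1}{a,b}{0}{q,z}$.

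Next, I would recall the Gasper--Rahman three-term identity \cite[(III.31)]{GR},
\[
\rphis{2}{1}{a,b}{c}{q,z} = \frac{(b, c/a;q)_\infty \te(az;q)}{(c, b/a;q)_\infty \te(z;q)}\rphis{2}{1}{a, aq/c}{aq/b}{q, \tfrac{cq}{abz}} + \mathrm{idem}(a;b),
\]
and take the limit $c \to 0$. The prefactors $(c;q)_\infty$ and $(c/a;q)_\infty$ tend to $1$. For the inner $_2\varphi_1$, pulling $c^n$ inside the upper $q$-shifted factorial gives the identity
\[
(aq/c;q)_n \left(\tfrac{cq}{abz}\right)^n = \prod_{k=0}^{n-1}(c - aq^{k+1})\cdot \left(\tfrac{q}{abz}\right)^n \xrightarrow{c\to 0} (-1)^n q^{n(n-1)/2}\left(\tfrac{q^2}{bz}\right)^n,
\]
which is exactly the $n$-th coefficient in $\rphis{1}{1}{a}{aq/b}{q, q^2/(bz)}$. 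Hence termwise
\[
\rphis{2}{1}{a, aq/c}{aq/b}{q, \tfrac{cq}{abz}} \longrightarrow \rphis{1}{1}{a}{aq/b}{q, \tfrac{q^2}{bz}},
\]
and the $(a \leftrightarrow b)$ summand behaves identically. Substituting these limits into (III.31) produces precisely the right-hand side of \eqref{eq:transform5}.

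The main obstacle is the justification of the termwise passage to the limit: (III.31) is stated for generic $c$, and the individual $_2\varphi_1$ on its right-hand side becomes singular at $c=0$ because of the parameter $aq/c$. One must verify that the rewritten form above is jointly continuous in $c$ and admits a uniform convergent majorant in a neighborhood of $c=0$. The rapid Gaussian decay $q^{n(n-1)/2}$ (together with the elementary observation that $\prod_{k=0}^{n-1}(c - aq^{k+1})$ is bounded locally uniformly in $c$) makes this a straightforward application of dominated convergence, but it is the one nontrivial point in the argument.
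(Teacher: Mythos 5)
Your proposal is correct in substance, but it is worth noting that it starts from the same key identity as the paper and differs only in how the third parameter is specialized to zero. The three-term formula you quote is precisely the connection formula the paper invokes as \cite[(III.32)]{GR} (your label (III.31) appears to be a slip). The paper's trick is to first apply Heine's transformation \cite[(III.3)]{GR} to both ${}_2\varphi_1$-series on the right-hand side, which moves the parameter $C$ into harmless numerator positions ($C/A$ and $C/B$), so that $C=0$ can simply be substituted — no limiting process — and \eqref{eq:transform1}, \eqref{eq:transform2} then convert everything into ${}_1\varphi_1$'s. You instead let $c\to 0$ directly, which creates the singular combination $(aq/c;q)_n\,(cq/abz)^n$ and forces an interchange of limit and summation; your termwise limits are computed correctly, and the prefactor limits are right, so the identity does follow once that interchange is justified. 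Your justification, however, is slightly misattributed: the Gaussian factor $q^{n(n-1)/2}$ is present only in the limiting series, not in a majorant valid for $c$ near $0$. Uniform boundedness of $\prod_{k=0}^{n-1}(c-aq^{k+1})$ in $n$ and $c$ (true, since all but finitely many factors have modulus $<1$) only yields a geometric majorant of the form $\mathrm{const}\cdot|q/(abz)|^n$, which is summable only when $|abz|>q$. This is easily repaired: either shrink the neighbourhood $|c|\le\epsilon$ so that the tail factors are at most $\epsilon+|a|q^{k+1}\le 2\epsilon$ with $2\epsilon|q/(abz)|<1$, or establish the identity for $|z|$ large and extend it in $z$ by analyticity of both sides on $\C\setminus\{0\}$ away from the poles and zeros of the prefactors. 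With that point tightened your route is a valid alternative; the paper's Heine-first manipulation simply avoids the singular limit altogether, which is why its proof needs no analytic justification.
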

\begin{proof}
For \eqref{eq:transform5} we use \cite[(III.32)]{GR}, where we apply Heine's transformation \cite[(III.3)]{GR} to both $_2\varphi_1$-functions on the right hand side;
\[
\begin{split}
\rphis{2}{1}{A,B}{C}{q,Z} = & \frac{ (B, C/A;q)_\infty \te(AZ;q) }{(C,B/A,Z,Cq/ABZ;q)_\infty} \rphis{2}{1}{q/B, C/B}{Aq/B}{q,\frac{q}{Z}} \\
&+ \frac{ (A, C/B;q)_\infty \te(BZ;q) }{(C,A/B,Z,Cq/ABZ;q)_\infty} \rphis{2}{1}{q/A, C/A}{Bq/A}{q,\frac{q}{Z}}.
\end{split}
\]
Now we set $(A,B,C,Z) = (a,b,0,z)$, we apply \eqref{eq:transform2} to the two $_2\varphi_1$-functions on the right hand side, and \eqref{eq:transform1} on the left hand side.
\end{proof}
Finally, we need a transformation involving a terminating series.
\begin{lemma}
For $n \in \N$, the following transformation formula holds:
\begin{equation} \label{eq:transform6}
\rphis{1}{1}{a}{aq^{-n}}{q,bq^{-n}} = \frac{ (b;q)_\infty }{(q/a;q)_n} \left(\frac{b}{a}\right)^n \rphis{2}{0}{q^{-n},q/a}{-}{q,\frac{aq^n}{b}},
\end{equation}
\end{lemma}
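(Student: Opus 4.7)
The plan is to reduce the left-hand $_{1}\varphi_{1}$ to a terminating $_{2}\varphi_{1}$ via the already-proven transformation \eqref{eq:transform2}, and then to identify that $_{2}\varphi_{1}$ with the $_{2}\varphi_{0}$ on the right-hand side by reversing the resulting finite sum.

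The first step is a direct specialisation of \eqref{eq:transform2} with $(a,b,z)\mapsto(a,aq^{-n},bq^{-n})$. Under this substitution $az/b$ collapses to $b$ and $b/a$ to $q^{-n}$, so \eqref{eq:transform2} yields
\[
\rphis{1}{1}{a}{aq^{-n}}{q,bq^{-n}}=(b;q)_\infty\,\rphis{2}{1}{0,q^{-n}}{aq^{-n}}{q,b}.
\]
Because of the $(q^{-n};q)_k$ factor this $_{2}\varphi_{1}$ is a polynomial in $b$ of degree $n$, and after pulling out $(b/a)^n$ the $_{2}\varphi_{0}$ on the right of \eqref{eq:transform6} is also a polynomial in $b$ of degree $n$. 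Thus the proof has been reduced to a single identity between two explicit length-$(n{+}1)$ polynomial sums.

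To carry out the matching I would expand the $_{2}\varphi_{1}$ and reverse the summation index by $k\mapsto n-k$. The only algebraic tool needed is the universal inversion $(x;q)_m=(-x)^m q^{\binom{m}{2}}(q^{1-m}/x;q)_m$: applied to $(aq^{-n};q)_{n-k}$ it produces $(q^{k+1}/a;q)_{n-k}$, which telescopes to $(q/a;q)_n/(q/a;q)_k$, and applied in the standard form $(q^{-n};q)_m=(-1)^m q^{\binom{m}{2}-nm}(q;q)_n/(q;q)_{n-m}$ it rewrites $(q^{-n};q)_{n-k}/(q;q)_{n-k}$ as a Gaussian coefficient. After collecting powers of $-1$, $a$, $b$ and $q$ the reversed sum becomes
\[
\frac{(b/a)^{n}}{(q/a;q)_n}\sum_{k=0}^{n}\frac{(q;q)_n}{(q;q)_k(q;q)_{n-k}}(q/a;q)_k (a/b)^{k},
\]
and undoing the same identity for $(q^{-n};q)_k/(q;q)_k$ recognises the inner sum as $\rphis{2}{0}{q^{-n},q/a}{-}{q,aq^n/b}$, which completes the argument.

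The only difficulty is organisational: each use of the inversion formula contributes factors $(-1)^{\,\cdot\,}$ and Gaussian exponents $q^{\binom{\cdot}{2}}$, and after the index reversal one has to verify that these combine precisely into the $(-1)^k q^{-\binom{k}{2}}$ factor that the $_{2}\varphi_{0}$ convention produces. Beyond \eqref{eq:transform2} and this single inversion, no deeper identity is invoked.
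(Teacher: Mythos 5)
Your proposal is correct and follows exactly the paper's route: apply \eqref{eq:transform2} to turn the $_1\varphi_1$ into the terminating $_2\varphi_1(0,q^{-n};aq^{-n};q,b)$, then reverse the order of summation; your explicit bookkeeping with the inversion $(x;q)_m=(-x)^mq^{\binom{m}{2}}(q^{1-m}/x;q)_m$ is just the detail the paper leaves implicit, and the intermediate sum you display does match the $_2\varphi_0$ on the right of \eqref{eq:transform6}.
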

\begin{proof}
First we apply \eqref{eq:transform2}, then the $_1\varphi_1$-series becomes a terminating $_2\varphi_1$-series. Reversing the order of summation gives the result.
\end{proof}

\begin{remark} \label{rem:c<->d}
If we first apply \eqref{eq:transform6}, then \eqref{eq:transform4}, and finally \eqref{eq:transform6} again to the $_2\varphi_0$-function in \eqref{eq:Pn}, we obtain
\[
\rphis{2}{0}{q^{-n},cx}{-}{q,\frac{dq^n}{c}} = \left( \frac{d}{c}\right)^n \rphis{2}{0}{q^{-n},dx}{-}{q,\frac{cq^n}{d}}.
\]
So we see that $P_n$ defined by \eqref{eq:Pn} is indeed symmetric in $c$ and $d$. This can also be obtained at once by applying a limit case of Heine's transformation \cite[(III.2)]{GR}.
\end{remark}

\section{Orthogonality relations on $\R$} \label{sect:orthogonality on R}
In this section we obtain orthogonality relations for certain $_1\varphi_1$-functions from spectral analysis of the second-order $q$-difference operator corresponding to the Al-Salam--Carlitz II polynomials.

\subsection{The second-order $q$-difference operator} \label{ssec:L}
Recall $z_-<0$ and $z_+>0$. We set
\[
\R_q^+ = z_+ q^\Z, \qquad \R_q^- = z_- q^\Z, \qquad \R_q = \R_q^- \cup \R_q^+.
\]
By $F(\R_q)$ we denote the vector space consisting of complex-valued functions on $\R_q$, and we define $F(\R_q^+)$ and $F(\R_q^-)$ in the same way.

For nonzero complex parameters $c$ and $d$ we define the second-order $q$-difference operator \mbox{$L=L_{c,d;q}:F(\R_q) \to F(\R_q)$} by
\begin{equation} \label{eq:L}
(Lf)(x) = A(x)\Big( f(qx)-f(x) \Big) + B(x) \Big( f(x/q) - f(x) \Big), \qquad f \in F(\R_q),\ x \in \R_q,
\end{equation}
where
\[
A(x) = \left( 1- \frac{1}{cx} \right) \left( 1- \frac{1}{dx} \right), \qquad B(x) = \frac{q}{cd x^2}.
\]
Clearly, $L$ is symmetric in $c$ and $d$.

We will consider $L$ as an unbounded operator on a Hilbert space $\mathcal H$, that we now introduce.
We define a weight function $w$ on $\R_q$ by
\begin{equation} \label{eq:def w}
w(x) = w(x;c,d;q) = \frac{1}{(cx,dx;q)_\infty}.
\end{equation}
Observe that
\[
w(z_\pm q^k) = 1 + \mathcal O(q^k), \qquad k \to \infty,
\]
and, using the $\te$-product identity \eqref{eq:te-product},
\begin{equation} \label{eq:asymptotics w}
w(z_{\pm} q^k) = \frac{(cd z_\pm^2)^k q^{k(k-1)} }{\te(cz_\pm,dz_\pm;q)} \Big(1+\mathcal O(q^{-k})\Big), \qquad k \to -\infty.
\end{equation}
To ensure positivity of $w$ we assume from here on that the parameters $c$ and $d$ satisfy one of the following conditions:
\begin{itemize}
\item $c \in \C \setminus \R$ and $c=\overline{d}$;
\item $c,d>0$ and there exists a $k \in \Z$ such that $q^{k} < z_+c < q^{k-1}$ and $q^{k} < z_+d < q^{k-1}$;
\item $c,d<0$ and there exists a $k \in \Z$ such that $q^k < z_- c < q^{k-1}$ and $q^k < z_- d < q^{k-1}$.
\end{itemize}
We define $\mathcal H= \mathcal H(c,d;z_-,z_+)$ to be the Hilbert space consisting of functions in $F(\R_q)$ that have finite norm with respect to the inner product
\[
\langle f,g \rangle = \int_{\R_q} f(x) \overline{g(x)} w(x) \,d_q x,
\]
where (recall)
\[
\int_{\R_q} f(x)\, d_q x = (1-q) \sum_{k=-\infty}^\infty \Big( f(z_+ q^k) z_+q^k  - f(z_-q^k) z_- q^k \Big).
\]
We will use a truncated version of the inner product on $\mathcal H$ to show that $L$, with a suitable dense domain, is self-adjoint. For $k,l,m,n \in \Z$ with $l<k$ and $n<m$ we define
\[
\langle f,g \rangle_{k,l;m,n} = \int_{z_+ q^{m+1}}^{ z_+ q^n }  f(x) \overline{g(x)} w(x) \,d_q x- \int_{z_- q^{k+1} }^{z_- q^l
} f(x) \overline{g(x)} w(x) \,d_q x,
\]
where, for $n,m \in \Z$ with $n\geq m$,
\[
\int_{\al q^{n+1}}^{\al q^m} f(x) \,d_q x = (1-q) \sum_{k=m}^n f(\al q^k) \al q^k.
\]
For $f,g \in \mathcal H$ we obtain back the inner product $\langle f,g\rangle$ by letting $k,m \to \infty$ and $l,n \to -\infty$. For $f,g \in F(\R_q)$ we now introduce the Casorati determinant
\begin{gather}
D(f,g)(x) = \Big( f(x) g(qx) - f(qx) g(x) \Big) v(x), \qquad x \in \R_q \label{eq:Casorati determinant}\\
v(x) = \frac{ 1-q}{cdx} \frac{ 1}{(cqx,dqx;q)_\infty}. \nonumber
\end{gather}
Note that $v(x) = (1-q) x A(x) w(x) = (1-q) qx B(qx) w(qx)$.
\begin{lemma} \label{lem:Lfg <-> Dfg}
For $f,g \in F(\R_q)$ we have
\[
\langle Lf,g\rangle_{k,l;m,n} - \langle f,Lg \rangle_{k,l;m,n} = D(f,\overline g) (z_- q^l) - D(f,\overline g) (z_- q^{k-1}) - D(f,\overline g) (z_+ q^m) + D(f,\overline g) (z_+ q^{n-1}).
\]
\end{lemma}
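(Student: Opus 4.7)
The plan is to carry out $q$-summation by parts. In each of the three parameter regimes listed in \S\ref{ssec:L} the coefficients $A,B$ are real on $\R_q$, so $\overline{(Lg)(x)} = (L\overline g)(x)$. Set $\phi(x) := (Lf)(x)\overline{g(x)} - f(x)(L\overline g)(x)$; expanding with \eqref{eq:L}, the four ``diagonal'' contributions of the form $\pm A(x)f(x)\overline{g(x)}$ and $\pm B(x)f(x)\overline{g(x)}$ cancel pairwise, leaving
\[
\phi(x) = A(x)\bigl[f(qx)\overline{g(x)} - f(x)\overline{g(qx)}\bigr] + B(x)\bigl[f(x/q)\overline{g(x)} - f(x)\overline{g(x/q)}\bigr].
\]

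The central step is to rewrite each bracket in terms of the Casorati determinant $D(f,\overline g)$. Directly from \eqref{eq:Casorati determinant} the first bracket equals $-D(f,\overline g)(x)/v(x)$, and the substitution $y=x/q$ in the same definition identifies the second as $D(f,\overline g)(x/q)/v(x/q)$. The two identities $v(x) = (1-q)xA(x)w(x)$ and $v(x/q) = (1-q)xB(x)w(x)$ recorded right after \eqref{eq:Casorati determinant} then combine perfectly with the factor $(1-q)x\,w(x)$ coming from the Jackson integration element to yield the clean pointwise identity
\[
(1-q)\,x\,w(x)\,\phi(x) \;=\; D(f,\overline g)(x/q) - D(f,\overline g)(x).
\]
This is the discrete analog of $\phi\, w\, dx = d(\text{Wronskian})$, and it is the substantive content of the lemma.

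What remains is to insert this identity into the Jackson-sum representations of the two truncated integrals and sum. On $\R_q^+$ the sum $\sum_{j=n}^{m}\bigl[D(f,\overline g)(z_+q^{j-1}) - D(f,\overline g)(z_+q^j)\bigr]$ telescopes, contributing the two boundary terms at $z_+q^{n-1}$ and $z_+q^{m}$; the analogous telescoping sum on $\R_q^-$ over $j=l,\dots,k$, together with the overall minus sign attached to the negative-axis integral in the definition of $\langle\cdot,\cdot\rangle_{k,l;m,n}$, produces the two remaining boundary terms as asserted. The main obstacle is purely bookkeeping: a one-step index shift is introduced by the fact that the $B$-term contributes $D$ at $x/q$ rather than at $x$, and one must carefully track the sign attached to the $\R_q^-$ piece; no analytical subtlety arises because the truncated sums are finite.
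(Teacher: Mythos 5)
Your argument is the paper's own proof, carried out in more detail: the paper simply states the pointwise identity $\big((Lf)(x)\overline{g}(x)-f(x)(L\overline g)(x)\big)(1-q)xw(x)=D(f,\overline g)(x/q)-D(f,\overline g)(x)$ as a ``direct verification'' and then telescopes, and your derivation of that identity (cancellation of the diagonal terms, the two rewritings via $v(x)=(1-q)xA(x)w(x)$ and $v(x/q)=(1-q)xB(x)w(x)$, realness of $A,B$ so that $\overline{Lg}=L\overline g$) is correct.

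One caveat, precisely at the step you waved off as ``bookkeeping'': if you actually carry out the telescoping you do \emph{not} get the boundary terms literally as printed in the lemma. With the paper's conventions, the positive-axis sum runs over $j=n,\dots,m$ and gives $D(f,\overline g)(z_+q^{n-1})-D(f,\overline g)(z_+q^{m})$, which matches; but the negative-axis sum runs over $j=l,\dots,k$ and, after the overall minus sign, gives
\[
D(f,\overline g)(z_-q^{k})-D(f,\overline g)(z_-q^{l-1}),
\]
whereas the statement has $D(f,\overline g)(z_-q^{l})-D(f,\overline g)(z_-q^{k-1})$, i.e.\ $k$ and $l$ interchanged. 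The computed version is the one consistent with how the lemma is used: letting $k,m\to\infty$ and $l,n\to-\infty$, Lemma \ref{lem:D(f,g)(infty)} kills the terms at $\pm\infty$ and one is left with $D(f,\overline g)(0^-)-D(f,\overline g)(0^+)$, which is exactly what the self-adjointness argument needs to vanish on $\mathcal D$; the printed indices would instead leave $-D(f,\overline g)(0^-)-D(f,\overline g)(0^+)$. So the discrepancy is a typo in the statement rather than an error in your method, but asserting that the telescoping ``produces the boundary terms as asserted'' glosses over it; you should display the telescoped result explicitly and note the corrected indices.
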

\begin{proof}
We have
\[
\big((Lf)(x)g(x) - f(x) (Lg)(x)\big) (1-q) x w(x) = D(f,\overline{g})(x/q) - D(f,\overline{g})(x),
\]
which follows from a direct verification. As a result $\langle Lf,g\rangle_{k,l;m,n} - \langle f,Lg \rangle_{k,l;m,n}$ are two finite telescoping sums, which proves the lemma.
\end{proof}
From Lemma \ref{lem:Lfg <-> Dfg} we see that the behavior of the Casorati determinant $D(f,g)(x)$, $f,g \in \mathcal H$, at $0$ and $\pm \infty$ is important for determining a dense domain for $L$.
For the Casorati determinant at $\pm \infty$ we need the behavior of $v$ at $\pm\infty$;
\begin{equation} \label{eq:v(infty)}
v(z_\pm q^k) = \frac{(1-q)z_\pm}{\te(cz_\pm,dz_\pm;q)} (cdqz^2_\pm)^k q^{k(k-1)} \Big(1+\mathcal O(q^{-k})\Big), \qquad k \to -\infty.
\end{equation}
This follows from the $\te$-product identity \eqref{eq:te-product}. This leads to the following result.
\begin{lemma} \label{lem:D(f,g)(infty)}
Let $f,g \in \mathcal H$, then $\lim_{k \to -\infty} D(f,g)(z_\pm q^k) =0$.
\end{lemma}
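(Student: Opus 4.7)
My plan is to prove this by a direct triangle-inequality argument combined with a careful use of both expressions $v(x) = (1-q)xA(x)w(x) = (1-q)qxB(qx)w(qx)$ recorded just before the lemma statement.

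From the definition of $D(f,g)$ and the elementary inequality $|\alpha\beta| \le \tfrac12(|\alpha|^2+|\beta|^2)$ applied to the two products $f(x)g(qx)$ and $f(qx)g(x)$, I reduce to showing that each of the four nonnegative quantities
\[
|v(x)||f(x)|^2,\qquad |v(x)||g(x)|^2,\qquad |v(x)||f(qx)|^2,\qquad |v(x)||g(qx)|^2
\]
tends to $0$ as $x = z_\pm q^k$ with $k \to -\infty$. For the two whose squared factor has argument $x$, I substitute $|v(x)| = (1-q)|x||A(x)|w(x)$; with $x = z_\pm q^k$ this rewrites, e.g., $|v(x)||f(x)|^2$ as $(1-q)|z_\pm||A(z_\pm q^k)|$ times the bracket $|f(z_\pm q^k)|^2 w(z_\pm q^k)\,q^k$. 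The prefactor is bounded (since $A(x)\to 1$ as $|x|\to\infty$), while the bracket, up to a positive constant, is the $k$-th summand of the convergent series giving $\|f\|^2$, hence tends to $0$. For the two whose squared factor has argument $qx$, I use instead $|v(x)| = (1-q)q|x||B(qx)|w(qx)$, which recasts, e.g., $|v(x)||g(qx)|^2$ as $(1-q)|z_\pm||B(z_\pm q^{k+1})|$ times $|g(z_\pm q^{k+1})|^2 w(z_\pm q^{k+1})\,q^{k+1}$; this second factor again tends to $0$ as a summand of the series giving $\|g\|^2$, and the prefactor also tends to $0$ since $|B(z_\pm q^{k+1})| = q^{-2k-1}/|cd z_\pm^2|$ and $q^{-2k-1}\to 0$ as $k\to -\infty$.

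The main delicacy I anticipate is the matching: each of the four terms must be paired with the expression for $v$ whose $w$-factor shares its argument with the squared $f$ or $g$ appearing alongside it. A naive pairing produces mismatched weights such as $w(x)|g(qx)|^2$, and the requisite cancellation of $q$-powers fails. Once the right pairing is in place, the proof reduces to the familiar observation that the terms of a convergent positive series tend to $0$.
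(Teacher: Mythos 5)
Your proof is correct, and it takes a genuinely more elementary route than the paper's. The paper first uses the theta-product asymptotics of the weight, \eqref{eq:asymptotics w}, to extract from $f\in\mathcal H$ the decay statement $\lim_{k\to-\infty}(qcdz_\pm^2)^{\frac12 k}q^{\frac12 k(k-1)}f(z_\pm q^k)=0$, and then combines this (for $f$ and $g$, with shifted argument) with the explicit asymptotics \eqref{eq:v(infty)} of $v$ to show $\lim_{k\to-\infty}f(z_\pm q^k)g(z_\pm q^{k+1})v(z_\pm q^k)=0$, the other cross term following by symmetry. You bypass the asymptotic expansions entirely: after $|\alpha\beta|\le\tfrac12(|\alpha|^2+|\beta|^2)$ you use only the exact factorizations $v(x)=(1-q)xA(x)w(x)=(1-q)qxB(qx)w(qx)$, the positivity of $w$ (guaranteed by the standing assumptions on $c,d$), the boundedness of $A$ and decay of $B$ at infinity, and the fact that the nonnegative terms of the convergent series defining $\|f\|^2$ and $\|g\|^2$ tend to zero. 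Your closing observation about the pairing is indeed the crux: each squared factor must be matched with the expression for $v$ whose weight sits at the same argument, and with that matching no cancellation of $q$-powers beyond $A$ bounded and $B(qx)\to0$ is needed. What the paper's route buys is the quantitative decay rate for elements of $\mathcal H$ (in the same spirit as Lemma \ref{lem:asymptotics}); what yours buys is independence from the theta-function asymptotics, so it would carry over verbatim to any weight for which $v$ factors against $w$ in this way.
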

\begin{proof}
From the asymptotic behavior \eqref{eq:asymptotics w} of the weight function $w$ at $\pm \infty$, we see that any $f \in \mathcal H$ satisfies
\[
\lim_{k \to -\infty} (qcdz^2)^{\frac12k} q^{\frac12k(k-1)} f(z_\pm q^k) = 0.
\]
Then for $f,g \in \mathcal H$ we find, using \eqref{eq:v(infty)},
\[
\lim_{k \to -\infty} f(z_\pm q^k) g(z_\pm q^{k+1}) v(z\pm q^k) = 0,
\]
hence also $\lim_{k \to -\infty} D(f,g)(z_\pm q^k) =0$.
\end{proof}

For $f \in F(\R_q)$ we define
\[
f(0^\pm) = \lim_{k \to \infty} f(z_\pm q^k), \qquad f'(0^\pm) = \lim_{k \to \infty} (D_q f)(z_\pm q^k),
\]
provided the limits exists. Here $D_qf$ denotes the $q$-derivative of $f$; $(D_q f)(x)= \frac{ f(x) - f(qx) }{x(1-q)}$ for $x \neq 0$. Let $\mathcal D \subset \mathcal H$ be given by
\[
\mathcal D = \left\{ f \in \mathcal H \mid Lf \in \mathcal H,\ f(0^+)=f(0^-), \ f'(0^+)=f'(0^-) \right\}.
\]
Note that $\mathcal D$ contains the finitely supported functions in $\mathcal H$, hence $\mathcal D$ is dense in $\mathcal H$.
\begin{prop}
The densely defined operator $(L,\mathcal D)$ is self-adjoint.
\end{prop}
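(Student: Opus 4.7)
My plan is to argue in two steps: first that $(L, \mathcal{D})$ is symmetric, and then that $\mathcal{D}(L^*) \subseteq \mathcal{D}$. For symmetry I would take $f, g \in \mathcal{D}$, apply Lemma~\ref{lem:Lfg <-> Dfg}, and pass to the limits $k, m \to \infty$ and $l, n \to -\infty$. By Lemma~\ref{lem:D(f,g)(infty)} the Casorati contributions at $\pm\infty$ vanish, so only the boundary values at $0^\pm$ survive. A direct rewriting of the Casorati determinant gives
\[
D(f, g)(x) = \frac{(1-q)^2}{cd\,(cqx, dqx; q)_\infty}\bigl(g(x) (D_q f)(x) - f(x) (D_q g)(x)\bigr),
\]
so $D(f, g)(0^\pm) = \frac{(1-q)^2}{cd}\bigl(g(0^\pm) f'(0^\pm) - f(0^\pm) g'(0^\pm)\bigr)$. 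The matching conditions built into $\mathcal{D}$ (namely $f(0^+) = f(0^-)$, $f'(0^+) = f'(0^-)$, and the same for $g$) then force $D(f, \bar g)(0^+) = D(f, \bar g)(0^-)$, and hence $\langle Lf, g\rangle = \langle f, Lg\rangle$.

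For the inclusion $\mathcal{D}(L^*) \subseteq \mathcal{D}$, let $g \in \mathcal{D}(L^*)$ with $L^* g = h$. Testing $\langle Lf, g\rangle = \langle f, h\rangle$ against the finitely supported $f \in \mathcal{D}$ localises the identity pointwise and yields $Lg = h$ as a formal identity on $\R_q$, so $Lg \in \mathcal{H}$. The crucial next step is to show that the limits $g(0^\pm)$ and $g'(0^\pm)$ exist. Since $L$ preserves $F(\R_q^+)$ and $F(\R_q^-)$ separately, I would work at $0^+$: writing $g_k = g(z_+ q^k)$ and $h_k = g_{k+1} - g_k$, the equation $Lg = h$ rearranges to the first-order recursion
\[
h_k = \tfrac{B}{A}(z_+ q^k)\, h_{k-1} + \tfrac{h}{A}(z_+ q^k),
\]
where $(B/A)(x) = q + O(x)$ and $1/A(x) = O(x^2)$ as $x \to 0^+$. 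Since $h \in \mathcal{H}$ forces $h(z_+ q^k) = o(q^{-k/2})$, a telescoping estimate yields $h_k = O(q^k)$, so $g_k$ converges and $g(0^+)$ exists; a refinement showing that $h_k/q^k$ converges then gives $g'(0^+)$. The analysis at $0^-$ is identical. With these limits in place, the symmetry computation applied now to $g$ gives $D(f, \bar g)(0^-) = D(f, \bar g)(0^+)$ for all $f \in \mathcal{D}$; choosing finitely supported $f \in \mathcal{D}$ with $(f(0^\pm), f'(0^\pm))$ equal to $(1, 0)$ or $(0, 1)$ (both are easily constructed) then extracts $g'(0^+) = g'(0^-)$ and $g(0^+) = g(0^-)$, so $g \in \mathcal{D}$.

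The main obstacle is the regularity step just described: proving that $g(0^\pm), g'(0^\pm)$ exist for any $g$ in the maximal domain $\{g \in \mathcal{H} : Lg \in \mathcal{H}\}$. This is a limit-circle analysis at the singular endpoint $0$: the homogeneous equation $Lu = 0$ has an indicial recurrence $qu_{k+1} - (1+q)u_k + u_{k-1} = 0$ at $0$ with characteristic roots $1$ and $q^{-1}$, corresponding to a regular mode ($u \sim 1$) and a singular mode ($u \sim x^{-1}$); the $\mathcal{H}$-integrability (with $w \sim 1$ near $0$) rules out the singular one, and once this is pinned down the recursive estimate above produces the required limits. The symmetry and test-function parts are then routine.
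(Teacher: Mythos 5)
Your proposal follows essentially the same route as the paper (which in turn refers to \cite[Proposition 2.7]{KS03}): symmetry of $(L,\mathcal D)$ from Lemma~\ref{lem:Lfg <-> Dfg} and Lemma~\ref{lem:D(f,g)(infty)} together with the matching of the Casorati determinant at $0^\pm$; identification of $L^*$ with the difference operator $L$ restricted to $\mathcal D^*$ by testing against one-point functions; and finally $\mathcal D^*\subset\mathcal D$ by showing that every $g\in\mathcal D^*$ satisfies the matching conditions. The genuine added value of your write-up is the regularity step that the paper leaves implicit: for $g$ in the maximal domain you prove that $g(0^\pm)$ and $g'(0^\pm)$ exist, via the exact first-order recursion $h_k=\frac{B}{A}(z_\pm q^k)h_{k-1}+\frac{(Lg)(z_\pm q^k)}{A(z_\pm q^k)}$ for $h_k=g(z_\pm q^{k+1})-g(z_\pm q^k)$, with $\frac{B}{A}(x)=q+\mathcal O(x)$, $\frac1{A(x)}=\mathcal O(x^2)$ and $(Lg)(z_\pm q^k)=o(q^{-k/2})$; this correctly gives $h_k=\mathcal O(q^k)$ and convergence of $h_k/q^k$, hence of $g(z_\pm q^k)$ and $(D_qg)(z_\pm q^k)$, which is exactly what is needed before the boundary identity can be split into the two limits $D(f,\bar g)(0^+)$ and $D(f,\bar g)(0^-)$.

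Two inaccuracies should be repaired. First, the test functions in the last step cannot be \emph{finitely} supported: any finitely supported $f$ has $f(0^\pm)=f'(0^\pm)=0$. What you want are $f\in\mathcal D$ vanishing for large $|x|$ and equal to $1$ (respectively to $x$) at all points $z_\pm q^k$ with $k\geq0$; these are easily checked to lie in $\mathcal D$ and have $(f(0^\pm),f'(0^\pm))=(1,0)$, respectively $(0,1)$, after which your extraction of the matching conditions for $g$ goes through. Second, the closing ``limit-circle'' commentary is based on a mixed-up indicial recurrence: near $0$ the coefficient $\sim\frac1{cdx^2}$ multiplies the $f(qx)$-term and $\frac{q}{cdx^2}$ the $f(x/q)$-term, so with $u_k=u(z_\pm q^k)$ the recurrence toward $0$ is $u_{k+1}-(1+q)u_k+qu_{k-1}=0$, with roots $1$ and $q$. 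The two local modes thus behave like $1$ and $x$; both are bounded and square integrable near $0$, there is no $x^{-1}$ mode, and nothing gets ``ruled out'' by $\mathcal H$-integrability (indeed this limit-circle behaviour at $0$ is precisely why the matching conditions in $\mathcal D$ are needed at all). Fortunately your recursion estimate is self-contained and needs no such preliminary step, so this misstatement does not affect the validity of the proof itself.
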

This is proved similar as in \cite[Proposition 2.7]{KS03}. Let us give the main ingredients here.
\begin{proof}
Firstly, the domain $\mathcal D$ is chosen such that $D(f,g)(0^+)=D(f,g)(0^-)$ for $f,g \in \mathcal D$. So by Lemmas \ref{lem:Lfg <-> Dfg} and \ref{lem:D(f,g)(infty)} $(L,\mathcal D)$ is symmetric with respect to $\langle\cdot,\cdot\rangle$.

Secondly, if $f\in \mathcal D$ has support at only one point $x \in \R_q$, then $\langle Lf,g \rangle = \langle f, Lg\rangle$ for any $g \in F(\R_q)$. This holds in particular for $g \in \mathcal D^*$, where $(L^*,\mathcal D^*)$ is the adjoint of $(L,\mathcal D)$, so $(Lg)(x) = (L^*g)(x)$. We conclude that $L^*$ is the second-order $q$-difference operator $L$ restricted to $\mathcal D^*$.

Thirdly, for $f \in \mathcal D$ and $g \in \mathcal D^*$ we have $\langle Lf,g\rangle=\langle f,L^*g\rangle$, so from Lemmas \ref{lem:Lfg <-> Dfg} and \ref{lem:D(f,g)(infty)} we find $D(f,\overline g)(0^-) = D(f,\overline g)(0^+)$. This implies $g(0^-)=g(0^+)$ and $g'(0-)=g'(0^+)$, so $g \in \mathcal D$, hence $\mathcal D^* \subset \mathcal D$ and then $(L,\mathcal D)$ is self-adjoint.
\end{proof}

\subsection{Eigenfunctions} \label{ssec:eigenfunctions}
Our next goal is to determine eigenfunctions (in the algebraic sense) of the $q$-difference operator $L$. We will need the following result.
\begin{lemma} \label{lem:Vmu}
For $\mu \in \C$ we define
\[
V_\mu = \left\{ f \in F(\R_q) \mid Lf = (\mu-1)f, \ f(0^+)=f(0^-),\ f'(0^+)=f'(0^-) \right\}
\]
and
\[
V_\mu^\pm = \{ f \in F(\R_q^\pm) \mid Lf = (\mu-1)f \}.
\]
\begin{enumerate}[(i)]
\item For $f,g \in V_\mu^\pm$ the Casorati determinant $D(f,g) \in F(\R_q^\pm)$ is constant.
\item For $f,g \in V_\mu$, the Casorati determinant $D(f,g) \in F(\R_q)$ is constant.
\item We have $\dim(V_\mu) \leq 2$. Furthermore, in case $\dim(V_\mu)=2$, the restriction operator $V_\mu \to V_\mu^+$ is a bijection, and similarly for the restriction operator $V_\mu \to V_\mu^-$.
\end{enumerate}
\end{lemma}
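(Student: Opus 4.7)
For (i), I would observe that the identity
\[
\bigl((Lf)(x) g(x) - f(x)(Lg)(x)\bigr)(1-q)x\,w(x) = D(f,g)(x/q) - D(f,g)(x)
\]
holds for arbitrary $f,g \in F(\R_q)$ by direct verification (using $v(x) = (1-q)x A(x) w(x)$ and $v(x/q) = (1-q)x B(x) w(x)$); it is the purely algebraic version of the identity used in the proof of Lemma~\ref{lem:Lfg <-> Dfg}, without the complex conjugation on $g$. For $f,g \in V_\mu^\pm$ the left-hand side vanishes on $\R_q^\pm$, so $D(f,g)(x/q) = D(f,g)(x)$ for all $x \in \R_q^\pm = z_\pm q^{\Z}$; since each of $\R_q^\pm$ is a single $q$-orbit, $D(f,g)$ is constant there.

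For (ii), I would substitute $f(qx) = f(x) - x(1-q)(D_q f)(x)$ (and the analogue for $g$) into the definition of the Casorati to obtain
\[
D(f,g)(x) = -x(1-q)\bigl(f(x)(D_q g)(x) - (D_q f)(x) g(x)\bigr) v(x),
\]
and since $x\,v(x) = \frac{1-q}{cd\,(cqx,dqx;q)_\infty} \to \frac{1-q}{cd}$ as $x \to 0$, one obtains
\[
\lim_{x\to 0^\pm} D(f,g)(x) = -\frac{(1-q)^2}{cd}\bigl(f(0^\pm)g'(0^\pm) - f'(0^\pm)g(0^\pm)\bigr).
\]
The matching conditions defining $V_\mu$ equate the two one-sided limits, so the two constants on $\R_q^+$ and $\R_q^-$ produced by (i) agree.

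For (iii), solving $Lf = (\mu-1)f$ for $f(x/q)$ in terms of $f(x)$ and $f(qx)$ (valid since $B(x) = q/(cdx^2) \neq 0$) shows $f|_{\R_q^\pm}$ is determined by any two consecutive values, whence $\dim V_\mu^\pm \leq 2$. To bound $\dim V_\mu \leq 2$ I would prove that the restriction $r_+: V_\mu \to V_\mu^+$ is injective. Given $f \in V_\mu$ with $f|_{\R_q^+} = 0$, the matching conditions give $f(0^-) = f'(0^-) = 0$. Choosing a basis $e_1, e_2$ of $V_\mu^-$ by prescribing linearly independent initial data at two consecutive points of $\R_q^-$, the Casorati $D(e_1,e_2)$ is nonzero at those initial points, and by (i) is a nonzero constant on $\R_q^-$; the boundary formula from (ii) then forces $e_1(0^-)e_2'(0^-) - e_1'(0^-)e_2(0^-) \neq 0$. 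Expanding $f|_{\R_q^-} = \alpha e_1 + \beta e_2$ and imposing the vanishing of $f(0^-)$ and $f'(0^-)$, Cramer's rule gives $\alpha = \beta = 0$, so $f = 0$. Hence $r_+$ is injective and $\dim V_\mu \leq 2$; when $\dim V_\mu = 2$, injectivity automatically yields a bijection onto the $2$-dimensional $V_\mu^+$, and the argument for $r_-$ is symmetric.

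The main obstacle is justifying that the basis $e_1, e_2$ of $V_\mu^\pm$ used above actually has finite one-sided limits $e_i(0^\pm)$ and $e_i'(0^\pm)$. Multiplying the eigenequation by $x^2$ and letting $x \to 0$ yields the indicial equation $\lambda^2 - (1+q)\lambda + q = 0$ with roots $1$ and $q$, so both fundamental solutions are heuristically regular at the origin (with leading-order behaviour of the form constant plus a multiple of $q^k$ at $x = z_\pm q^k$). Promoting this leading-order analysis rigorously to the full recursion --- or, equivalently, exhibiting two linearly independent regular eigenfunctions explicitly, which is essentially what the ${}_1\varphi_1$-analysis in the following subsection accomplishes --- is the technical crux.
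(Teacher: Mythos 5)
Your parts (i) and (ii) are fine: the conjugation-free version of the identity behind Lemma~\ref{lem:Lfg <-> Dfg} immediately gives constancy of $D(f,g)$ on each $q$-orbit, and your rewriting of $D(f,g)$ in terms of $D_q$ together with $xv(x)\to(1-q)/cd$ correctly matches the two one-sided constants via the conditions defining $V_\mu$. (For the record, the paper does not prove this lemma at all; it cites \cite[Lemma 3.1]{Groen}, so you are being compared against what a complete proof would require rather than against an argument in the text.)

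In (iii) there is a genuine gap, and it is exactly the one you flag yourself: your injectivity argument for the restriction $V_\mu\to V_\mu^+$ needs a fundamental system $e_1,e_2$ of $V_\mu^-$ for which the limits $e_i(0^-)$ and $e_i'(0^-)$ exist, since otherwise the ``Wronskian at $0$'' computed from the boundary formula of (ii) is not even defined, and nothing rules out a nonzero $f\in V_\mu^-$ with $f(0^-)=f'(0^-)=0$. The indicial computation (roots $1$ and $q$) is only a heuristic: with limiting roots $1$ and $q$ a solution could a priori still have $f(0^-)=f'(0^-)=0$ through subexponential corrections, so one must actually prove a Frobenius/Perron--Kreuser type asymptotic statement for this recursion near $x=0$, or exhibit a regular basis explicitly. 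The fallback you suggest --- using $\psi_\mu,\psi_\mu^\dag$, whose regularity at $0$ is established in the proof of Proposition~\ref{prop:basis for V} independently of this lemma --- does close the argument, but only for $\mu\notin q^{-\N-1}\cup\{0\}$, where Lemma~\ref{lem:Casorati dets} makes them linearly independent; for the degenerate values $\mu\in q^{-\N-1}\cup\{0\}$ (precisely where $\psi_\mu$ and $\psi_\mu^\dag$ are proportional) a second, possibly irregular, solution must be handled separately, whereas the lemma is asserted for all $\mu\in\C$. Two smaller points: determining $f|_{\R_q^\pm}$ from two consecutive values requires propagating toward $0$ as well, i.e.\ solving for $f(qx)$, which needs $A(x)\neq 0$ on $\R_q$ --- true under the paper's standing assumptions on $c,d$ (they force $1/c,1/d\notin\R_q$), but not a consequence of $B(x)\neq0$ alone, so say so; and prescribing arbitrary initial data at two consecutive points of $\R_q^-$ likewise uses $A\neq0$ there.
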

For the proof, see \cite[Lemma 3.1]{Groen}. The main purpose of this lemma is to relate eigenfunctions of $L$ on $\R_q^+$ in a canonical way to eigenfunctions on $\R_q^-$ with the same eigenvalue.\\

We will now determine explicit eigenfunctions of $L$. We define
\begin{equation} \label{eq:psi}
\begin{split}
\psi_\ga(x)=\psi_\ga(x;c,d;q) &= (cq\ga x,dx;q)_\infty \rphis{1}{1}{q\ga}{cq\ga x}{q,\frac{cq}{d}}, \\ \psi_\ga^\dag(x)=\psi_\ga^\dag(x;c,d;q) &= (dq\ga x,cx;q)_\infty \rphis{1}{1}{q\ga}{dq\ga x}{q,\frac{dq}{c}}.
\end{split}
\end{equation}
Observe that $\psi_\ga^\dag$ is obtained from $\psi_\ga$ by interchanging $c$ and $d$. Furthermore, $\psi_\ga(x)$ and $\psi_\ga^\dag(x)$ are both entire functions in $x$ and in $\ga$. We also define
\begin{equation} \label{eq:Phi}
\Phi_\ga(x)=\Phi_\ga(x;c,d;q) = \frac{(cx;q)_\infty }{(q/dx,c\ga x;q)_\infty} \rphis{1}{1}{q/cx}{q/c\ga x}{q, \frac{q}{d\ga x}}.
\end{equation}
Other expressions for $\psi_\ga$, $\psi_\ga^\dag$, $\Phi_\ga$ can be obtained from Lemma \ref{lem:1phi1-transforms}. An expression for $\Phi_\ga$ that will be useful, is obtained from applying the transformation formula \eqref{eq:transform1},
\begin{equation} \label{eq:Phi2}
\Phi_\ga(x)=\frac{ (cx,1/\ga;q)_\infty} {(q/dx;q)_\infty \te(c\ga x;q)} \rphis{2}{1}{q/cx,q/dx}{0}{q,\frac{1}{\ga}}, \qquad |\ga|>1.
\end{equation}
\begin{prop}
The functions $\psi_\ga, \psi_\ga^\dag$ and $\Phi_\ga$ are solutions of the eigenvalue equation $Lf = (\ga -1) f$.
\end{prop}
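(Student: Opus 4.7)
The plan is to verify $Lf = (\ga-1)f$ by direct substitution for each of the three candidate eigenfunctions. Since $L_{c,d;q}$ is symmetric in $c \leftrightarrow d$ and $\psi_\ga^\dag(x;c,d;q) = \psi_\ga(x;d,c;q)$, the statement for $\psi_\ga^\dag$ follows from that for $\psi_\ga$ automatically, so it suffices to handle $\psi_\ga$ and $\Phi_\ga$ separately.

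For $\psi_\ga$, I would compute the effect of the shifts $x \mapsto qx$ and $x \mapsto x/q$ on the two factors of \eqref{eq:psi}. Using $(aq;q)_\infty = (a;q)_\infty/(1-a)$, the prefactor transforms by
\[
\frac{(cq^2\ga x, dqx; q)_\infty}{(cq\ga x, dx; q)_\infty} = \frac{1}{(1-cq\ga x)(1-dx)}, \qquad \frac{(c\ga x, dx/q; q)_\infty}{(cq\ga x, dx; q)_\infty} = (1-c\ga x)(1-dx/q),
\]
while the bottom parameter $cq\ga x$ of the $_1\varphi_1$-factor is multiplied by $q^{\pm 1}$ and the argument $cq/d$ is unaffected. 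Substituting into $Lf$ and clearing the common prefactor $(cq\ga x, dx; q)_\infty$, the eigenvalue equation reduces to a three-term identity
\[
\al(x)\,\rphis{1}{1}{q\ga}{cq^2\ga x}{q,\frac{cq}{d}} + \be(x)\,\rphis{1}{1}{q\ga}{c\ga x}{q,\frac{cq}{d}} = \de(x)\,\rphis{1}{1}{q\ga}{cq\ga x}{q,\frac{cq}{d}},
\]
with $\al, \be, \de$ explicit rational functions of $x$ coming from $A(x)$, $B(x)$, and the Pochhammer ratios above. This is a standard contiguous relation between $_1\varphi_1$-series whose bottom parameter is shifted by $q^{\pm 1}$, and I would verify it term-by-term in the power series in $cq/d$ using the series definition of $_1\varphi_1$.

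For $\Phi_\ga$ it is cleaner to start from the $_2\varphi_1$-form \eqref{eq:Phi2} (initially for $|\ga|>1$), since there the $x$-dependence of the hypergeometric factor is confined to its two top parameters $q/(cx), q/(dx)$, while the argument $1/\ga$ and bottom parameter $0$ are untouched by the shifts. An entirely analogous computation yields a contiguous relation among three $_2\varphi_1$-series whose top parameters are simultaneously multiplied by $q^{\pm 1}$; the constraint $|\ga|>1$ is then removed by analytic continuation in $\ga$, since both sides of $L\Phi_\ga = (\ga-1)\Phi_\ga$ are meromorphic in $\ga$. The main obstacle throughout is the bookkeeping: one must check that the rational prefactors produced by $A(x)$, $B(x)$, $1-A(x)-B(x)$, and the $q$-Pochhammer ratios match exactly the coefficients of a standard contiguous relation for $_1\varphi_1$ (respectively $_2\varphi_1$). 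Once this matching is in place, no deeper input is required — the identity is essentially the second-order $q$-difference equation for the confluent $q$-hypergeometric function.
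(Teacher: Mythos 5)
Your proposal is correct and follows essentially the same route as the paper: substitute directly, use the $c\leftrightarrow d$ symmetry of $L$ to dispose of $\psi_\ga^\dag$, reduce the equation for $\psi_\ga$ to a three-term contiguous relation for $_1\varphi_1$ with the lower parameter shifted by $q^{\pm1}$, reduce the equation for $\Phi_\ga$ (in the form \eqref{eq:Phi2}, $|\ga|>1$) to a contiguous relation for $_2\varphi_1$ with both upper parameters shifted, and finish by meromorphic continuation in $\ga$. The only cosmetic difference is that the paper obtains the needed three-term relations from known contiguous relations in Gasper--Rahman (Exercises 1.9(i), 1.10(iii), 1.10(iv)) rather than verifying them term by term as you propose.
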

\begin{proof}
This follows from $q$-contiguous relations. We write $\phi(a,b) = {}_2\varphi_1(a,b;0;q,z)$, then
\[
\phi(aq,b) - \phi(a,b)=az(1-b) \phi(aq,bq),
\]
see \cite[Exercise 1.9(i)]{GR}. Using the symmetry in $a$ and $b$ and replacing $(a,b)$ by $(a/q,b/q)$ gives us
\[
\phi(a/q,b) - \phi(a/q,b/q)=\tfrac{bz}{q}(1-a/q) \phi(a,b).
\]
Combining this with
\[
q\phi(a/q,b) + bz(1-a)\phi(aq,b) = [q-az(1-b/z)]\phi(a,b),
\]
see \cite[Exercise 1.10(iii)]{GR}, we obtain
\[
q \phi(a/q,b/q) + abz^2(1-a)(1-b) \phi(aq,bq) = [q-z(a+b)+abz(1+1/q)] \phi(a,b).
\]
Setting $a=q/cx$, $b=q/dx$ and $z=1/\ga$, we find after a straightforward calculation that $\Phi_\ga$ (as given by \eqref{eq:Phi2}) is an eigenfunction of $L$ for eigenvalue $\ga-1$. Since \eqref{eq:Phi} is the meromorphic continuation of \eqref{eq:Phi2} for $|\ga|\leq 1$, $\Phi_\ga$ is a solution of the eigenvalue equation $Lf=(\ga-1)f$ for generic values of $\ga$. (Later on we determine explicitly the poles of $\Phi_\ga$).

Next we write $\phi(b)= {}_1\varphi_1(a;b;q,z)$, then
\[
(q-b)(az-b) \phi(b/q) + [b(q-b) + bz - az(1+q)] \phi(b) - \tfrac{z(b-a)}{1-b} \phi(bq) =0,
\]
which is a limit case of \cite[Exercise 1.10(iv)]{GR}. Setting $a=q\ga$, $b=cq\ga x$ and $z=qc/d$, we find after a calculation that $\phi_\ga$ is an eigenfunction of $L$ for eigenvalue $\ga-1$. Interchanging $c$ and $d$ gives the result for $\phi_\ga^\dag$.
\end{proof}

The following two lemmas will be useful later on.
\begin{lemma} \label{lem:asymptotics}
The asymptotic behavior of $\psi_\ga(x)$ and $\Phi_\ga(x)$ for $|x|\to \infty$ in $\R_q$ is given by
\begin{gather*}
\psi_\ga(z_\pm q^k) = \te(cq\ga z_\pm,dz_\pm ;q)(cdq\ga z_\pm^2)^{-k} q^{-k(k-1)}\Big(1+\mathcal O(q^{-k})\Big), \\
\Phi_\ga(z_\pm q^k) = \frac{\te(cz_\pm;q)}{\te(c\ga z_\pm;q)} \ga^k \Big(1+\mathcal O(q^{-k})\Big),
\end{gather*}
as $k \to -\infty$.
\end{lemma}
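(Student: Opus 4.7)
My strategy is to rewrite each function in a form where an underlying $q$-hypergeometric series manifestly converges to $1$ as $|x|\to\infty$ in $\R_q$, and then to extract the leading asymptotics from the surviving $q$-Pochhammer factors via the $\te$-product identity \eqref{eq:te-product}. In all cases that arise, any factor of the form $(q/ax;q)_\infty$ will equal $1+\mathcal O(q^{-k})$ at $x=z_\pm q^k$ as $k\to-\infty$, since its first corrective term is of order $q^{1-k}$.

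For $\psi_\ga$, the series variable $cq/d$ in \eqref{eq:psi} is independent of $x$, so I will first apply transformation \eqref{eq:transform2} with $(a,b,z)=(q\ga,\, cq\ga x,\, cq/d)$. This sends $b/a\mapsto cx$ and $az/b\mapsto q/(dx)$, and combining with $(dx,q/dx;q)_\infty=\te(dx;q)$ yields
\[
\psi_\ga(x) = (cq\ga x;q)_\infty\, \te(dx;q)\, \rphis{2}{1}{0,cx}{cq\ga x}{q,\frac{q}{dx}}.
\]
At $x=z_\pm q^k$ the new series variable tends to $0$, while the coefficient $(cx;q)_n/(cq\ga x;q)_n$ is bounded in $x$ (converging to $(q\ga)^{-n}$), so the $_2\varphi_1$ equals $1+\mathcal O(q^{-k})$. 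For the prefactor I will write $(cq\ga x;q)_\infty=\te(cq\ga x;q)/(q/(cq\ga x);q)_\infty$ and apply \eqref{eq:te-product} to both $\te(cq\ga x;q)$ and $\te(dx;q)$. The factors $(-cq\ga z_\pm)^{-k}$ and $(-dz_\pm)^{-k}$ combine to $(cdq\ga z_\pm^2)^{-k}$ (the signs cancel), the two copies of $q^{-\frac12 k(k-1)}$ combine to $q^{-k(k-1)}$, and the remaining thetas assemble into $\te(cq\ga z_\pm,dz_\pm;q)$, giving the claimed asymptotics.

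For $\Phi_\ga$, no transformation is required. In the defining formula \eqref{eq:Phi}, the parameters $q/cx$, $q/(c\ga x)$ and the series variable $q/(d\ga x)$ of the $_1\varphi_1$ all tend to $0$ as $|x|\to\infty$, so the $_1\varphi_1$ equals $1+\mathcal O(q^{-k})$; likewise $(q/dx;q)_\infty=1+\mathcal O(q^{-k})$. Only the ratio $(cx;q)_\infty/(c\ga x;q)_\infty$ contributes nontrivially: converting each infinite product to a theta modulo a $1+\mathcal O(q^{-k})$ error and applying \eqref{eq:te-product} yields $\ga^k\,\te(cz_\pm;q)/\te(c\ga z_\pm;q)$ with the same error, since $(-cz_\pm)^{-k}/(-c\ga z_\pm)^{-k}=\ga^k$ while the two copies of $q^{-\frac12 k(k-1)}$ cancel in the ratio.

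The main obstacle will be identifying the right transformation for $\psi_\ga$: without one, the $_1\varphi_1$ in \eqref{eq:psi} has a fixed series variable and one would be forced to estimate a sum whose $n$\textsuperscript{th} coefficient $(cx;q)_n$ grows like $x^n$. Among the four transformations in Lemma \ref{lem:1phi1-transforms}, only \eqref{eq:transform2} sends the series variable to something proportional to $1/x$; once this is chosen, the remaining work is routine bookkeeping with the $\te$-product identity and with verifying that every surviving infinite product and $q$-series has all its arguments tending to $0$.
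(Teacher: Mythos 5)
Your argument is correct and, apart from one detour, is the calculation the paper has in mind: the paper's proof is a one-line remark that one reads off the leading behavior of the $q$-shifted factorial prefactors via the $\te$-product identity \eqref{eq:te-product} and checks that the remaining series contributes $1+\mathcal O(q^{-k})$, and your treatment of $\Phi_\ga$ is exactly this. The detour is the use of \eqref{eq:transform2} for $\psi_\ga$. It is sound, and your formula $\psi_\ga(x)=(cq\ga x;q)_\infty\,\te(dx;q)\,\rphis{2}{1}{0,cx}{cq\ga x}{q,\frac{q}{dx}}$ together with the subsequent bookkeeping is right, but the ``main obstacle'' you cite is not actually there: in \eqref{eq:psi} the variable $x$ enters the $_1\varphi_1$ only through the \emph{lower} parameter $cq\ga x$, so its $n$-th term carries $1/(cq\ga x;q)_n=\mathcal O(|x|^{-n})$ rather than a coefficient $(cx;q)_n$ growing like $x^n$ (no such coefficient occurs in \eqref{eq:psi}). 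Hence the $_1\varphi_1$ in \eqref{eq:psi} is already $1+\mathcal O(q^{-k})$ at $x=z_\pm q^k$, and one can apply \eqref{eq:te-product} directly to the prefactor $(cq\ga x,dx;q)_\infty$, which is presumably the ``straightforward calculation'' intended by the paper; your transformation buys nothing but costs nothing either. A minor point common to both versions: the tail estimates need the factors $1-c\ga z_\pm q^{j}$ to stay bounded away from $0$, which holds for fixed $\ga$ with $c\ga z_\pm\notin q^{\Z}$; in the excluded case $\te(cq\ga z_\pm;q)=0$, so the stated leading term degenerates anyway and nothing is lost.
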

\begin{proof}
This is a straightforward calculation using the $\te$-product identity \eqref{eq:te-product} and the explicit expressions \eqref{eq:psi}, \eqref{eq:Phi} for $\psi_\ga$ and $\Phi_\ga$
\end{proof}
\begin{lemma} \label{lem:expansion Phi psi}
For $k \in \Z$ and $\ga \in \C\setminus\{0\}$,
\[
(q\ga;q)_\infty \te(dz_\pm,c\ga z_\pm;q) \Phi_\ga(z_\pm q^k) = c_\pm(\ga) \psi_\ga(z_\pm q^k) + c^\dag_\pm(\ga) \psi_\ga^\dag(z_\pm q^k),
\]
where
\begin{equation} \label{eq:c-function}
c_\pm(\ga) = \frac{ \te(cz_\pm, d\ga z_\pm;q) }{\te(d/c;q) }, \qquad c_\pm^\dag(\ga) = \frac{ \te(dz_\pm, c\ga z_\pm;q) }{\te(c/d;q)}.
\end{equation}
\end{lemma}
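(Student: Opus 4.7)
My plan is to apply the three-term transformation \eqref{eq:transform5} to the $_1\varphi_1$-series appearing in the explicit expression \eqref{eq:Phi} for $\Phi_\ga(x)$. With the choice $(a,b,z) = (q/cx,\,q/dx,\,1/\ga)$, that $_1\varphi_1$ is precisely the one on the left of \eqref{eq:transform5}. The right-hand side of \eqref{eq:transform5} then produces two new $_1\varphi_1$-series which, after a subsequent application of \eqref{eq:transform4}, match the $_1\varphi_1$'s appearing in $\psi_\ga^\dag$ and $\psi_\ga$ respectively, writing $\Phi_\ga(x)$ as a linear combination of $\psi_\ga^\dag(x)$ and $\psi_\ga(x)$ with coefficients that are meromorphic in $x$ and $\ga$.

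Concretely, after the first substitution the two new $_1\varphi_1$'s are $\rphis{1}{1}{q/cx}{qd/c}{q,qd\ga x}$ and $\rphis{1}{1}{q/dx}{qc/d}{q,qc\ga x}$. In both cases the quantity $az/b$ appearing in \eqref{eq:transform4} works out to $q\ga$, so that transformation yields $\rphis{1}{1}{q\ga}{qd\ga x}{q,qd/c}$ and $\rphis{1}{1}{q\ga}{qc\ga x}{q,qc/d}$, i.e.\ exactly the $_1\varphi_1$-series in $\psi_\ga^\dag$ and $\psi_\ga$ from \eqref{eq:psi}. Collapsing pairs $(y;q)_\infty(q/y;q)_\infty = \te(y;q)$ throughout, and using $\te(q/y;q) = \te(y;q)$, I expect the $\psi_\ga^\dag(x)$-coefficient to reduce to the $x$-independent constant $[(q\ga;q)_\infty \te(c/d;q)]^{-1}$, and the $\psi_\ga(x)$-coefficient to $[(q\ga;q)_\infty \te(d/c;q)]^{-1}$ times the theta-quotient $\te(cx;q)\te(d\ga x;q)/[\te(dx;q)\te(c\ga x;q)]$. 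Restricting to $x = z_\pm q^k$ and applying \eqref{eq:te-product} to all four theta factors, the product of the $(-\,\cdot\,)^{-k}$-type prefactors is $1$ (the contributions from $c,d\ga$ in the numerator balance those from $d,c\ga$ in the denominator) and the four $q^{-k(k-1)/2}$ factors cancel, so the quotient is constant in $k$ and equals its value at $k=0$. Multiplying both sides by $(q\ga;q)_\infty \te(dz_\pm, c\ga z_\pm;q)$ then produces the claim, with $c_\pm(\ga), c_\pm^\dag(\ga)$ as in \eqref{eq:c-function}.

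The main obstacle is the careful bookkeeping of the many infinite products and theta functions generated by the two chained transformations; the repeated collapses $(y;q)_\infty(q/y;q)_\infty = \te(y;q)$ are essential for isolating the specific theta quotients defining $c_\pm, c_\pm^\dag$. As a secondary point, \eqref{eq:transform5} is strictly a meromorphic identity in $\ga$: apparent poles at $\ga \in q^{-\N}$ coming from the $\te(1/\ga;q)$ in the denominators are cancelled by the factor $(1/\ga;q)_\infty$ produced when passing back to $\Phi_\ga$ (via $\te(1/\ga;q) = (1/\ga;q)_\infty (q\ga;q)_\infty$), so the identity, established initially for generic $\ga$, extends by continuity to all $\ga \in \C \setminus \{0\}$.
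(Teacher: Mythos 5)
Your proposal is correct and follows exactly the paper's own route: apply the three-term transformation \eqref{eq:transform5} with $(a,b,z)=(q/cx,q/dx,1/\ga)$ to the $_1\varphi_1$ in \eqref{eq:Phi}, then \eqref{eq:transform4} to the two resulting $_1\varphi_1$'s to recognize $\psi_\ga$ and $\psi_\ga^\dag$, and finally use the $\te$-product identity \eqref{eq:te-product} on $x=z_\pm q^k$ to obtain the $k$-independent coefficients $c_\pm(\ga)$, $c_\pm^\dag(\ga)$. The intermediate coefficients you compute are the correct ones, so nothing further is needed.
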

\begin{proof}
This follows from the three-term transformation formula \eqref{eq:transform5} with parameters $(a,b,z) = (q/cx,q/dx,1/\ga)$ and with transformation \eqref{eq:transform4} applied to the two $_1\varphi_1$-functions on the right hand side, and definitions \eqref{eq:psi} and \eqref{eq:Phi} for $\psi_\ga$, $\psi_\ga^\dag$ and $\Phi_\ga$.
\end{proof}
We define for $x \in \R_q^+$ and $\ga \neq 0$,

\begin{equation} \label{eq:phi+}
\phi_\ga^+(x) =(q\ga;q)_\infty \te(dz_+,c\ga z_+;q) \Phi_\ga(x).
\end{equation}
Note the $\phi_\ga^+$ is a solution of the eigenvalue equation $Lf = (\ga-1)f$ on $\R_q^+$, so by Lemma \ref{lem:Vmu} it has (for generic values of $\ga$) an extension to $V_\ga$, which we will also denote by $\phi_\ga^+$. Later on we show that an explicit expression for this extension is already given by Lemma \ref{lem:expansion Phi psi}. Note also that by Lemma \ref{lem:expansion Phi psi} the function $\phi_\ga^+$ is symmetric in $c$ and $d$, so $\phi_\ga^+$ is real-valued for $\ga \in \R$. Similarly, we define for $x \in \R_q^-$,
\begin{equation} \label{eq:phi-}
\phi_\ga^-(x) = (q\ga;q)_\infty \te(dz_-,c\ga z_-;q) \Phi_\ga(x).
\end{equation}
This function also has an extension to $V_\ga$, and it is also symmetric in $c$ and $d$. Observe that $\phi_\ga^-$ is obtained from $\phi_\ga^+$ by replacing $z_+$ by $z_-$.

\begin{lemma} \label{lem:Casorati dets}
We have
\[
\begin{split}
D(\psi_\ga, \phi^+_\ga) & =-\frac{1-q}{c} (q\ga;q)_\infty \te(dz_+,c\ga z_+ ;q),  \\
D(\psi_\ga^\dag, \phi^+_\ga) &=-\frac{1-q}{d} (q\ga;q)_\infty\te(cz_+,d\ga z_+ ;q), \\
D(\psi_\ga^\dag,\psi_\ga) &= -\frac{1-q}{d} (q\ga;q)_\infty\te(d/c;q).
\end{split}
\]
In particular, for $\ga \in \C \setminus q^{-\N-1}$ the functions $\psi_\ga$ and $\psi_\ga^\dag$ are linearly independent.
\end{lemma}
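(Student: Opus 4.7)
The plan is to reduce the computation of all three Casorati determinants to a single asymptotic calculation by exploiting the connection formula of Lemma~\ref{lem:expansion Phi psi}. First observe that $\psi_\ga$ and $\psi_\ga^\dag$ are entire in $x$, so they automatically satisfy the gluing conditions at $x=0$ and lie in $V_\ga$. The function $c_+(\ga)\psi_\ga + c_+^\dag(\ga)\psi_\ga^\dag$ therefore also lies in $V_\ga$ and, by Lemma~\ref{lem:expansion Phi psi}, agrees with $\phi_\ga^+$ on $\R_q^+$; by the uniqueness part of Lemma~\ref{lem:Vmu}(iii) it coincides with the extension of $\phi_\ga^+$ to $V_\ga$. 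Thus $\phi_\ga^+ = c_+(\ga)\psi_\ga + c_+^\dag(\ga)\psi_\ga^\dag$ on all of $\R_q$, and by Lemma~\ref{lem:Vmu}(ii) each of the three Casorati determinants in the statement is constant on $\R_q$.

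To evaluate $D(\psi_\ga,\phi_\ga^+)$ I would take $x = z_+ q^k$ and send $k\to-\infty$. By Lemma~\ref{lem:asymptotics}, $\psi_\ga(z_+q^k)$ grows like $(cdq\ga z_+^2)^{-k}q^{-k(k-1)}$ while $\phi_\ga^+(z_+q^k)$ grows only like $\ga^k$ (after multiplying the asymptotics of $\Phi_\ga$ by the prefactor in \eqref{eq:phi+}). Because of the quadratic $q^{-k^2}$ growth of $\psi_\ga$, the two products $\psi_\ga(x)\phi_\ga^+(qx)$ and $\psi_\ga(qx)\phi_\ga^+(x)$ differ at leading order by a factor of order $q^{-2k}$ and do \emph{not} cancel. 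Multiplying by $v(z_+q^k)$ from \eqref{eq:v(infty)} and simplifying using the $\te$-product identity \eqref{eq:te-product} (in particular $\te(cq\ga z_+;q) = -(c\ga z_+)^{-1}\te(c\ga z_+;q)$) yields
\[
D(\psi_\ga,\phi_\ga^+) = -\frac{1-q}{c}(q\ga;q)_\infty \te(dz_+, c\ga z_+;q).
\]

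The second formula follows from the first by the $c \leftrightarrow d$ symmetry: $\psi_\ga^\dag$ is obtained from $\psi_\ga$ by swapping $c$ and $d$, while $\phi_\ga^+$ is invariant under this swap (visible from the expansion just established, since interchanging $c$ and $d$ swaps the two summands). For the third formula, bilinearity and antisymmetry of the Casorati determinant combined with the expansion $\phi_\ga^+ = c_+(\ga)\psi_\ga + c_+^\dag(\ga)\psi_\ga^\dag$ give
\[
D(\psi_\ga,\phi_\ga^+) = c_+^\dag(\ga)\,D(\psi_\ga,\psi_\ga^\dag) = -c_+^\dag(\ga)\,D(\psi_\ga^\dag,\psi_\ga),
\]
and substituting $c_+^\dag(\ga) = \te(dz_+,c\ga z_+;q)/\te(c/d;q)$ together with $\te(d/c;q) = -(d/c)\te(c/d;q)$ produces the claimed third identity. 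Linear independence of $\psi_\ga$ and $\psi_\ga^\dag$ for $\ga \in \C \setminus q^{-\N-1}$ is then immediate from the non-vanishing of $(q\ga;q)_\infty$.

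The main obstacle is bookkeeping in the asymptotic step: one must carefully combine the quadratic $q^{\pm k(k-1)}$ factors coming from $\psi_\ga$ and from $v$, and apply the $\te$-product identity with the correct signs, in order to confirm the leading constant is exactly $-\frac{1-q}{c}(q\ga;q)_\infty \te(dz_+, c\ga z_+;q)$ without an off-by-sign error. Everything else is algebraic manipulation driven by the connection formula.
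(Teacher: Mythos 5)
Your proposal is correct and follows essentially the same route as the paper: constancy of the Casorati determinant via Lemma \ref{lem:Vmu}, evaluation of $D(\psi_\ga,\phi_\ga^+)$ by letting $x\to\infty$ along $\R_q^+$ using Lemma \ref{lem:asymptotics} and \eqref{eq:v(infty)}, the $c\leftrightarrow d$ symmetry for the second identity, and the expansion of Lemma \ref{lem:expansion Phi psi} with bilinearity for the third. The only (immaterial) difference is that you derive $D(\psi_\ga^\dag,\psi_\ga)$ from the first identity via $c_+^\dag(\ga)$, whereas the paper uses the second identity via $c_+(\ga)$.
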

\begin{proof}
Since the Casorati determinant is constant on $\R_q$ by Lemma \ref{lem:Vmu}, it is enough to compute the Casorati determinant on $\R_q^+$, which in turn can be computed by letting $x \to \infty$ in $\R_q^+$.
First we compute $D(\psi_\ga, \phi^+_\ga)$ and $D(\psi_\ga^\dag, \phi^+_\ga)$.
With the asymptotic behavior of $v$, $\psi_\ga$ and $\Phi_\ga$, see \eqref{eq:v(infty)} and Lemma \ref{lem:asymptotics}, we find
\begin{gather*}
\lim_{k \to -\infty} \psi_\ga(z_+ q^k) \Phi_\ga(z_+ q^{k+1}) v(z_+ q^k) = -\frac{1-q}{c},\\
\lim_{k \to -\infty} \psi_\ga(z_+ q^{k+1}) \Phi_\ga(z_+ q^{k}) v(z_+ q^k) =0,
\end{gather*}
so that $D(\psi_\ga,\Phi_\ga) = -\frac{1-q}{c}$. From \eqref{eq:phi+} we now find the expression for $D(\psi_\ga,\phi^+_\ga)$. Furthermore, by interchanging $c$ and $d$ we also obtain the expressions for $D(\psi_\ga^\dag, \phi^+_\ga)$.

Finally, using Lemma \ref{lem:expansion Phi psi} and \eqref{eq:phi+} we can expand $\psi_\ga$ on $\R^+_q$ in terms of $\phi_\ga^+$ and $\psi_\ga^\dag$. This leads to
\[
D(\psi_\ga^\dag,\psi_\ga) = \frac{D(\psi_\ga^\dag,\phi_\ga^+)}{c_+(\ga)},
\]
which gives the expression for $D(\psi_\ga^\dag,\psi_\ga)$.
\end{proof}

We can now give a basis for $V_\ga$.
\begin{prop} \label{prop:basis for V}
The functions $\psi_\ga$ and $\psi_\ga^\dag$ are in $V_\ga$. Furthermore, for $\ga \in \C \setminus (q^{-\N-1}\cup\{0\})$, the set $\{\psi_\ga,\psi_\ga^\dag\}$ is a linear basis for $V_\ga$.
\end{prop}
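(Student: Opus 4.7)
The plan is to verify the membership claim and the linear independence claim separately, then combine them via the dimension bound of Lemma \ref{lem:Vmu}(iii).

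For membership in $V_\ga$: the preceding proposition already establishes $L\psi_\ga = (\ga-1)\psi_\ga$ and the analogous identity for $\psi_\ga^\dag$, so only the matching conditions at $0$ remain to be checked. Since both $\psi_\ga(x)$ and $\psi_\ga^\dag(x)$ are entire in $x$ (noted immediately after \eqref{eq:psi}), the one-sided limits $f(0^\pm)$ both reduce to $f(0)$, and from the local expansion $f(x) = f(0) + f'(0)x + O(x^2)$ one reads off $(D_q f)(x) \to f'(0)$ as $x \to 0$ from either direction, giving $f'(0^+) = f'(0^-)$. This places $\psi_\ga$ and $\psi_\ga^\dag$ in $V_\ga$ for every $\ga \in \C$.

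For linear independence on $\C \setminus (q^{-\N-1}\cup\{0\})$: Lemma \ref{lem:Casorati dets} supplies
\[
D(\psi_\ga^\dag, \psi_\ga) \;=\; -\frac{1-q}{d}\,(q\ga;q)_\infty\,\te(d/c;q).
\]
The factor $(q\ga;q)_\infty$ vanishes precisely when $\ga \in q^{-\N-1}$, hence is nonzero on the prescribed parameter set. The theta factor $\te(d/c;q)$ is nonzero under the standing conditions on $(c,d)$: in the complex case $|d/c|=1$ with $d/c \neq 1$; in each real case the constraints on $z_\pm c, z_\pm d$ force $c/d \in (q,q^{-1})$, a neighborhood of $1$ containing no other powers of $q$, and the degenerate possibility $c=d$ makes the statement vacuous since then $\psi_\ga \equiv \psi_\ga^\dag$. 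Hence the Casorati determinant is nonzero, which immediately yields linear independence of $\psi_\ga$ and $\psi_\ga^\dag$ on all of $\R_q$.

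Finally, Lemma \ref{lem:Vmu}(iii) states $\dim V_\ga \leq 2$, so the two linearly independent elements $\psi_\ga, \psi_\ga^\dag \in V_\ga$ necessarily form a basis. The only mildly technical step is the case check verifying $\te(d/c;q) \neq 0$; this is the main, but very minor, obstacle, and the remainder of the argument is a direct assembly of the previous proposition with Lemmas \ref{lem:Casorati dets} and \ref{lem:Vmu}.
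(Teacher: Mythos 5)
Your proof is correct and follows the same overall structure as the paper's: membership in $V_\ga$ via the eigenvalue property plus the matching conditions at $0$, then linear independence from the Casorati determinant of Lemma \ref{lem:Casorati dets}, closed off by the dimension bound of Lemma \ref{lem:Vmu}(iii). The one place where you genuinely diverge is the verification of the matching conditions: the paper writes $\psi_\ga(x)=(dx;q)_\infty f(x;c,d)$, derives $(D_qf)(x;c,d)=\frac{-cq\ga}{1-q}f(x;cq,d)$, and uses the $q$-Leibniz rule to show $\lim_{x\to 0}(D_q\psi_\ga)(x)$ exists, whereas you simply invoke the fact (stated after \eqref{eq:psi}) that $\psi_\ga$ is entire in $x$, so that $f(0^\pm)=f(0)$ and $(D_qf)(x)\to f'(0)$ from either side by the Taylor expansion at $0$. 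Your route is shorter and more general (it shows any element of $F(\R_q)$ extending analytically to a neighborhood of $0$ satisfies the matching conditions), while the paper's computation has the side benefit of producing the explicit value of $\psi_\ga'(0^\pm)$. On the independence half you are in fact more explicit than the paper, which simply cites the ``in particular'' clause of Lemma \ref{lem:Casorati dets}: your case check that $\te(d/c;q)\neq 0$ under the standing assumptions on $(c,d)$ is a worthwhile addition, though your dismissal of $c=d$ as making the statement ``vacuous'' is loose --- in that degenerate case $\psi_\ga\equiv\psi_\ga^\dag$ and the two-element set collapses, so the proposition (like the paper's Lemma \ref{lem:Casorati dets}) is tacitly assuming $d/c\notin q^{\Z}$ rather than being vacuously true.
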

\begin{proof}
First we show that $\psi_\ga$ is in $V_\ga$; for $\psi_\ga^\dag$ the proof is the same.

We have
\[
\lim_{x \to 0} \psi_\ga(x) = \rphis{1}{1}{q\ga}{0}{q,\frac{cq}{d}},
\]
so clearly $\psi_\ga(0^+)= \psi_\ga(0^-)$.

Next we define $f(x;c,d) = (cq\ga x;q)_\infty {}_1\varphi_1(q\ga;cq\ga x;q,cq/d)$, then $\psi_\ga(x) = (dx;q)_\infty f(x;c,d)$. A straightforward calculation gives
\[
(D_qf)(x;c,d) = \frac{-cq\ga}{1-q} f(x;cq,d),
\]
and then
\[
\lim_{x \to 0} (D_q f)(x;c,d) = \frac{-cq\ga}{1-q} \rphis{1}{1}{q\ga}{0}{q,\frac{cq^2}{d}}.
\]
We also have $D_q (dx;q)_\infty = \frac{-d}{1-q}(dqx;q)_\infty$, so that $\lim_{x \to 0} D_q (dx;q)_\infty = \frac{-d}{1-q}$. Now from the product rule, $D_q(f(x)g(x)) = (D_q f)(x) g(x) + f(qx) (D_qg)(x)$, we see that $\lim_{x \to 0} (D_q\psi_\ga)(x)$ exists, so clearly $(D_q\psi_\ga)(0^+)=(D_q\psi_\ga)(0^-)$, so that $\psi_\ga$ in $V_\ga$.

Finally, for $\ga \not\in \{0\} \cup q^{-\N-1}$, $\psi_\ga$ and $\psi_\ga^\dag$ are linearly independent by Lemma \ref{lem:Casorati dets}. Since $\dim V_\ga \leq 2$ by Lemma \ref{lem:Vmu}, the set $\{\psi_\ga,\psi_\ga^\dag\}$ is a linear basis for $V_\ga$.
\end{proof}

\begin{cor} \label{cor:c-expansion phi}
For $\ga \in \C \setminus \{0\}$ the functions $\phi_\ga^\pm \in V_\ga$ are given explicitly by
\[
\phi_\ga^\pm = c_\pm(\ga) \psi_\ga + c_\pm^\dag(\ga) \psi_\ga^\dag.
\]
\end{cor}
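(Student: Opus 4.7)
The plan is to read off the corollary almost directly from Lemma \ref{lem:expansion Phi psi}, combined with the uniqueness-of-extension statement in Lemma \ref{lem:Vmu}(iii).

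First I would translate Lemma \ref{lem:expansion Phi psi} into a statement about $\phi_\ga^\pm$. By the definitions \eqref{eq:phi+} and \eqref{eq:phi-} of $\phi_\ga^\pm$ on $\R_q^\pm$, the left-hand side of the identity in Lemma \ref{lem:expansion Phi psi} evaluated at $x = z_\pm q^k$ is precisely $\phi_\ga^\pm(z_\pm q^k)$, so
\[
\phi_\ga^\pm(x) = c_\pm(\ga)\psi_\ga(x) + c_\pm^\dag(\ga)\psi_\ga^\dag(x), \qquad x \in \R_q^\pm.
\]

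Next I would verify that the candidate extension lies in the right space. By Proposition \ref{prop:basis for V}, both $\psi_\ga$ and $\psi_\ga^\dag$ belong to $V_\ga$, and since $V_\ga$ is a linear subspace of $F(\R_q)$, the combination $g_\ga^\pm := c_\pm(\ga)\psi_\ga + c_\pm^\dag(\ga)\psi_\ga^\dag$ is an element of $V_\ga$. The previous display shows that $g_\ga^\pm$ restricts on $\R_q^\pm$ to $\phi_\ga^\pm$, so $g_\ga^\pm$ is a well-defined, explicit extension of $\phi_\ga^\pm$ to $V_\ga$.

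Finally I would invoke uniqueness of the extension. For $\ga \in \C \setminus (q^{-\N-1}\cup\{0\})$, Proposition \ref{prop:basis for V} gives $\dim V_\ga = 2$, and Lemma \ref{lem:Vmu}(iii) then states that the restriction map $V_\ga \to V_\ga^\pm$ is a bijection. Hence the extension of $\phi_\ga^\pm \in V_\ga^\pm$ to an element of $V_\ga$ is unique, and must agree with $g_\ga^\pm$. For the remaining exceptional values $\ga \in q^{-\N-1}$ the identity follows by analytic continuation in $\ga$: the functions $\psi_\ga(x), \psi_\ga^\dag(x)$ are entire in $\ga$ (see \eqref{eq:psi}), the coefficients $c_\pm(\ga), c_\pm^\dag(\ga)$ are entire in $\ga$ (theta-functions), and the same holds for the left-hand side pointwise on $\R_q$, so equality on the complement of a discrete set forces equality throughout $\C \setminus \{0\}$.

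There is essentially no genuine obstacle here; the corollary packages Lemma \ref{lem:expansion Phi psi} plus Lemma \ref{lem:Vmu}(iii). The only subtlety worth flagging is the treatment of the discrete set $\ga \in q^{-\N-1}$ where the basis result of Proposition \ref{prop:basis for V} fails; but since both sides of the claimed formula are holomorphic in $\ga$, no independent argument beyond continuity is required.
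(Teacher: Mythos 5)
Your argument is correct and is essentially the paper's own proof, which simply combines Lemma \ref{lem:expansion Phi psi} (giving the identity on $\R_q^\pm$) with Proposition \ref{prop:basis for V} and the uniqueness of the extension from Lemma \ref{lem:Vmu}(iii). Your extra remark on the exceptional set $\ga \in q^{-\N-1}$ is a reasonable refinement (there the formula is best read as providing the extension, consistently with analyticity in $\ga$), but no genuinely different route is taken.
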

\begin{proof}
This follows from Lemma \ref{lem:expansion Phi psi} and Proposition \ref{prop:basis for V}.
\end{proof}
If $\ga \in q^{-\N-1}$, it follows from  Lemma \ref{lem:Casorati dets} that $\psi_\ga$ and $\psi_\ga^\dag$ are not linearly independent. In this case we have the following result.
\begin{lemma} \label{lem:phi=0}
For $n \in \N$ we have $\psi_{q^{-n-1}} = (c/d)^n\, \psi_{q^{-n-1}}^\dag$ and $\phi_{q^{-n-1}}^\pm=0$.
\end{lemma}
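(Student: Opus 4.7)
My plan is to derive both assertions from the single observation that $(q\ga;q)_\infty = (q^{-n};q)_\infty = 0$ when $\ga = q^{-n-1}$ and $n \in \N$, since the factor $1 - q^{-n}\cdot q^n = 0$ appears in the infinite product. Given this, $\phi_{q^{-n-1}}^\pm \equiv 0$ reads off immediately from the definitions \eqref{eq:phi+} and \eqref{eq:phi-}, in which $(q\ga;q)_\infty$ appears as an explicit prefactor. Under the paper's standing assumptions on $c,d$, the remaining theta factors $\te(dz_\pm, c\ga z_\pm;q)$ and the defining series for $\Phi_\ga$ remain finite at $\ga = q^{-n-1}$, so no competing singularity can cancel the vanishing prefactor.

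For the linear dependence between $\psi_{q^{-n-1}}$ and $\psi_{q^{-n-1}}^\dag$, I would substitute $\phi_{q^{-n-1}}^+ = 0$ into the expansion from Corollary \ref{cor:c-expansion phi} to obtain
\[
c_+(q^{-n-1})\,\psi_{q^{-n-1}} + c_+^\dag(q^{-n-1})\,\psi_{q^{-n-1}}^\dag = 0.
\]
Under the same standing assumptions---barring the trivial case $c=d$, in which $\psi_\ga \equiv \psi_\ga^\dag$ and the statement is tautological---one checks that $c_+(q^{-n-1}) \neq 0$: the theta values $\te(cz_+;q)$, $\te(dz_+;q)$ and $\te(d/c;q)$ are nonzero by hypothesis, and $\te(dq^{-n-1}z_+;q)$ is nonzero as well via the quasi-periodicity \eqref{eq:te-product}. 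One may thus divide, reducing the problem to evaluating the scalar $-c_+^\dag(q^{-n-1})/c_+(q^{-n-1})$.

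This final step is pure theta-function bookkeeping and is the only even mildly delicate part of the argument. I would apply \eqref{eq:te-product} in the form $\te(xq^{-n-1};q) = (-x)^{n+1}q^{-(n+1)(n+2)/2}\te(x;q)$ to both $\te(cq^{-n-1}z_+;q)$ and $\te(dq^{-n-1}z_+;q)$, which produces a net factor $(c/d)^{n+1}$ in the ratio of $c$-functions. Combining this with the inversion identity $\te(1/y;q) = -\te(y;q)/y$ (an immediate corollary of \eqref{eq:te-product} once one notes the evident symmetry $\te(q/x;q) = \te(x;q)$) at $y = c/d$ yields $\te(d/c;q)/\te(c/d;q) = -d/c$. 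The product of these two ingredients is $-(c/d)^n$, whence $\psi_{q^{-n-1}} = (c/d)^n\,\psi_{q^{-n-1}}^\dag$, as claimed.
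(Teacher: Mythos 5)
Your argument is correct, and it essentially reverses the logical order of the paper's proof. The paper first establishes $\psi_{q^{-n-1}}=(c/d)^n\,\psi^\dag_{q^{-n-1}}$ by applying the three-term transformation \eqref{eq:transform5} with $(a,b,z)=(q^{-n},q/dx,cx)$, where the factor $(q^{-n};q)_\infty$ annihilates one term, and only then deduces $\phi^\pm_{q^{-n-1}}=0$ from Corollary \ref{cor:c-expansion phi} and \eqref{eq:te-product}. You instead read off the vanishing from the zero of the prefactor $(q\ga;q)_\infty$ in \eqref{eq:phi+} and \eqref{eq:phi-} (correctly checking that under the standing assumptions $cx\notin q^\Z$ for $x\in\R_q$, so $\Phi_{q^{-n-1}}(x)$ and the theta factors stay finite), and then obtain the proportionality from the expansion via $-c_+^\dag(q^{-n-1})/c_+(q^{-n-1})=(c/d)^n$; your theta bookkeeping is exactly the computation the paper leaves implicit in its last sentence, so your route is if anything more economical, reusing only Corollary \ref{cor:c-expansion phi} and the explicit $c$-functions instead of invoking \eqref{eq:transform5} a second time. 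One point should be tightened: \eqref{eq:phi+} defines $\phi_\ga^+$ only on $\R_q^+$ and \eqref{eq:phi-} defines $\phi_\ga^-$ only on $\R_q^-$; as elements of $V_\ga$ they are functions on all of $\R_q$, given by the expansion of Corollary \ref{cor:c-expansion phi}, and Lemma \ref{lem:poles} later needs $\phi^+_{q^{-n-1}}(x)\,\phi^-_{q^{-n-1}}(y)=0$ for arbitrary $x,y\in\R_q$. Your prefactor argument therefore yields vanishing only on the respective half-lines; the vanishing on the other half-line follows from your own ratio computation, since $c_\pm(q^{-n-1})(c/d)^n+c_\pm^\dag(q^{-n-1})=0$ for both signs (the $z_\pm$ cancel in the ratio), or equivalently because the relation $\psi_{q^{-n-1}}=(c/d)^n\psi^\dag_{q^{-n-1}}$, once known on $\R_q^+$, extends to all of $\C$ as both sides are entire in $x$ and $\R_q^+$ accumulates at $0$. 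Finally, in the excluded case $c=d$ only the first assertion is tautological, not the second; but there $\te(d/c;q)=0$ and the $c$-function expansion degenerates for the paper's argument as well, so both proofs implicitly take $c\neq d$.
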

\begin{proof}
We use transformation formula \eqref{eq:transform5} with $(a,b,z) = (q^{-n},q/dx,cx)$. The second term on the right hand side vanishes because of the factor $(q^{-n};q)_\infty$, and then we find after using the $\te$-product identity \eqref{eq:te-product},
\[
\begin{split}
\psi_{q^{-n-1}}(x) &= (cxq^{-n},dx;q)_\infty \rphis{1}{1}{q^{-n}}{cxq^{-n}}{q,\frac{cq}{d}}\\
 & =  (dxq^{-n},cx;q)_\infty \left(\frac{c}{d}\right)^n \rphis{1}{1}{q^{-n}}{dxq^{-n}}{q,\frac{dq}{c}},
\end{split}
\]
so that $d^n \psi_{q^{-n-1}}(x) = c^n \psi_{q^{-n-1}}^\dag(x)$. From the expansion in Corollary \ref{cor:c-expansion phi} and the $\te$-product identity \eqref{eq:te-product}, we now find $\phi_{q^{-n-1}}^\pm(x)=0$ for all $x \in \R_q$.
\end{proof}
We can now also compute the Casorati determinant $D(\phi_\ga^-, \phi_\ga^+)$.
\begin{lemma} \label{lem:Casorati phi+-}
For $\ga \in \C\setminus\{0\}$,
\[
D(\phi_\ga^+, \phi_\ga^-) = -(1-q)z_+\ga (q\ga;q)_\infty \te(z_-/z_+,1/\ga,cd\ga z_- z_+;q).
\]
In particular, $\phi_\ga^+$ and $\phi_\ga^-$ are linearly independent for $\ga \in \C\setminus\big(q^\Z \cup (1/cdz_-z_+)q^\Z \cup \{0\} \big)$.
\end{lemma}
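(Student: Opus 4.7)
The plan is to reduce the computation to the known Casorati determinant $D(\psi_\ga,\psi_\ga^\dag)$ from Lemma \ref{lem:Casorati dets}, via the expansion from Corollary \ref{cor:c-expansion phi}, and then invoke a Riemann-type theta function identity to collapse the resulting difference of four-fold theta products into a single four-fold theta product.

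Concretely, I would first write, using Corollary \ref{cor:c-expansion phi},
\[
\phi_\ga^+ = c_+(\ga)\psi_\ga + c_+^\dag(\ga)\psi_\ga^\dag, \qquad
\phi_\ga^- = c_-(\ga)\psi_\ga + c_-^\dag(\ga)\psi_\ga^\dag.
\]
The Casorati determinant $D(\cdot,\cdot)$ is bilinear and antisymmetric (the diagonal terms vanish because $D(f,f)=0$), so
\[
D(\phi_\ga^+,\phi_\ga^-) = \bigl(c_+(\ga)c_-^\dag(\ga) - c_+^\dag(\ga)c_-(\ga)\bigr)\, D(\psi_\ga,\psi_\ga^\dag).
\]
By Lemma \ref{lem:Casorati dets} we have $D(\psi_\ga,\psi_\ga^\dag) = \frac{1-q}{d}(q\ga;q)_\infty \te(d/c;q)$, and by \eqref{eq:c-function}
\[
c_+(\ga)c_-^\dag(\ga) - c_+^\dag(\ga)c_-(\ga) = \frac{\te(cz_+,d\ga z_+,dz_-,c\ga z_-;q) - \te(dz_+,c\ga z_+,cz_-,d\ga z_-;q)}{\te(c/d;q)\te(d/c;q)}.
\]

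The main step is then to simplify the numerator. The two four-fold theta products have the same product of arguments, namely $c^2d^2\ga^2z_+^2z_-^2$, which is the signature that Riemann's theta relation (e.g.\ \cite[Exercise 2.16(i)]{GR}) applies. Choosing in that identity the parameters $xy=dz_+$, $x/y=c\ga z_+$, $uv=cz_-$, $u/v=d\ga z_-$ (so $x^2=cd\ga z_+^2$, $u^2=cd\ga z_-^2$, $y^2=d/c\ga$, $v^2=c/d\ga$), one computes $xv=cz_+$, $x/v=d\ga z_+$, $uy=dz_-$, $u/y=c\ga z_-$, $yv=1/\ga$, $y/v=d/c$, $xu=cd\ga z_-z_+$, $x/u=z_+/z_-$. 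This expresses the difference of four-fold theta products as a multiple of $\te(z_+/z_-,1/\ga,d/c,cd\ga z_-z_+;q)$; after flipping $\te(z_+/z_-;q)=-(z_+/z_-)\te(z_-/z_+;q)$ and using $\te(c/d;q)=-(c/d)\te(d/c;q)$, everything collapses to the desired expression
\[
D(\phi_\ga^+,\phi_\ga^-) = -(1-q)z_+\ga (q\ga;q)_\infty \te(z_-/z_+,1/\ga,cd\ga z_-z_+;q).
\]

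For the linear independence assertion I would just read off the zeros of the right-hand side: $(q\ga;q)_\infty$ vanishes on $q^{-\N-1}\subset q^\Z$, $\te(1/\ga;q)$ vanishes on $q^\Z$, $\te(cd\ga z_-z_+;q)$ vanishes on $(cdz_-z_+)^{-1}q^\Z$, while $\te(z_-/z_+;q)$ never vanishes since $z_-/z_+<0\notin q^\Z$. Hence $D(\phi_\ga^+,\phi_\ga^-)\neq 0$ exactly on $\C\setminus(q^\Z\cup (cdz_-z_+)^{-1}q^\Z\cup\{0\})$, and on this set $\phi_\ga^+$ and $\phi_\ga^-$ span the two-dimensional space $V_\ga$ and so are linearly independent.

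The only real obstacle is bookkeeping in the theta identity step: making sure the square-root branches in the Riemann relation cancel so that the simplification yields the clean four-fold theta product with no stray sign or factor. Everything else is bilinear algebra and substitution of already-established formulas.
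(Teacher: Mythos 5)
Your proposal is correct and follows essentially the same route as the paper: expand via Corollary \ref{cor:c-expansion phi}, feed in the Casorati determinants of Lemma \ref{lem:Casorati dets}, and collapse the difference of four-fold theta products with the fundamental theta identity \cite[Exercise 2.16(i)]{GR} (your substitution is the paper's up to relabeling and the same square-root branch bookkeeping). The only cosmetic difference is that you expand both $\phi_\ga^\pm$ and use $D(\psi_\ga,\psi_\ga^\dag)$ with the $2\times 2$ determinant of $c$-functions, whereas the paper expands only $\phi_\ga^-$ against $D(\psi_\ga,\phi_\ga^+)$ and $D(\psi_\ga^\dag,\phi_\ga^+)$; these are trivially equivalent.
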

\begin{proof}
Using Corollary \ref{cor:c-expansion phi} and Lemma \ref{lem:Casorati dets} we find
\[
\begin{split}
D(\phi_\ga^-, \phi_\ga^+) &= c_-(\ga) D(\psi_\ga, \phi_\ga^+) + c_-^\dag(\ga) D(\psi_\ga^\dag, \phi_\ga^+) \\
& = - (1-q)\frac{(q\ga;q)_\infty}{c\, \te(d/c;q)} \Big( \te(cz_-, d\ga z_-,dz_+,c\ga z_+;q) - \te(dz_-, c\ga z_-,cz_+,d\ga z_+;q)\Big).
\end{split}
\]
The result now follows from the fundamental $\te$-function identity, see \cite[Exercise 2.16(i)]{GR},
\[
\te(xv,x/v,yw,y/w)-\te(xw,x/w,yv,y/v) = \frac{y}{v} \te(xy,x/y,vw,v/w),
\]
with
\[
x=z_-\sqrt{cd \ga}, \quad y = z_+ \sqrt{cd\ga}, \quad v=\sqrt{c/d\ga}, \quad w = \sqrt{d/c\ga},
\]
where $\sqrt{\cdot}$ denotes the principal branch of the square root.
\end{proof}
In case $\phi_\ga^+$ and $\phi_\ga^-$ are not linearly independent, $\phi_\ga^+$ can explicitly be expressed as a multiple of $\phi_\ga^-$. Recall from Lemma \ref{lem:phi=0} that $\phi_\ga^+ = \phi_\ga^-=0$ for $\ga \in q^{-\N-1}$. In case $\ga \in q^\N$ we can express $\phi_\ga^\pm$ as a multiple of an Al-Salam--Carlitz polynomial of type II, see \eqref{eq:ASCIIpol}.
\begin{lemma} \label{lem:phi+=phi-}
Let $x \in \R_q$.
\begin{enumerate}[(i)]
\item For $n \in \N$, we have
\[
\begin{split}
\phi_{q^n}^+(x)& = \left(\frac{z_-}{z_+}\right)^n \frac{\te(cz_+,dz_+;q)}{\te(cz_-,dz_-;q)} \phi_{q^n}^-(x)\\
 &= (cz_+)^{-n} (q^{n+1};q)_\infty \te(cz_+,dz_+;q) V_n^{(c/d)}(cx;q).
\end{split}
\]
\item For $n \in \Z$, we have $\phi_{q^{n+1}/cdz_-z_+}^+(x) = (z_-/z_+)^n \phi_{q^{n+1}/cdz_-z_+}^-(x)$.
\end{enumerate}
\end{lemma}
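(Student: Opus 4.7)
The plan is to treat parts (i) and (ii) separately, using different tools. Both parts rest on the observation that at the special values of $\ga$ in question, the Casorati determinant $D(\phi_\ga^+,\phi_\ga^-)$ from Lemma~\ref{lem:Casorati phi+-} vanishes (in (i) because $\te(1/\ga;q)=0$ when $\ga\in q^\N$, and in (ii) because $\te(cd\ga z_-z_+;q)=\te(q^{n+1};q)=0$). This already forces scalar proportionality on each of $\R_q^+$ and $\R_q^-$; the real work is identifying the constant and connecting the two halves through $V_\ga$.

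For part (i), I would compute $\phi_{q^n}^+$ directly from \eqref{eq:phi+} and show it equals the claimed multiple of $V_n^{(c/d)}$. Starting from \eqref{eq:Phi} at $\ga=q^n$, the inner series becomes $\rphis{1}{1}{q/cx}{q^{1-n}/cx}{q,q^{1-n}/dx}$, to which the terminating transformation \eqref{eq:transform6} (with $a=q/cx$, $b=q/dx$) applies and produces $\rphis{2}{0}{q^{-n},cx}{-}{q,dq^n/c}$ up to an explicit prefactor. The $q$-shifted factorials that appear combine cleanly with the prefactor in \eqref{eq:Phi} (the identity $(cx;q)_\infty = (cx;q)_n (cq^n x;q)_\infty$ does the bulk of the cancellation) to give $\Phi_{q^n}(x)=(c/d)^n\rphis{2}{0}{q^{-n},cx}{-}{q,dq^n/c}$, which by \eqref{eq:ASCIIpol} equals $(-1)^n q^{\frac12 n(n-1)} V_n^{(c/d)}(cx;q)$. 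Feeding this back into \eqref{eq:phi+} and simplifying $\te(cq^n z_+;q)$ via the $\te$-product identity \eqref{eq:te-product} yields the second equation in (i). Performing exactly the same computation with $z_+$ replaced by $z_-$ gives the analogous formula for $\phi_{q^n}^-$, and the ratio produces the first equation.

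For part (ii), the cleanest route is the explicit expansion $\phi_\ga^\pm=c_\pm(\ga)\psi_\ga+c_\pm^\dag(\ga)\psi_\ga^\dag$ of Corollary~\ref{cor:c-expansion phi}, and to check that at $\ga=q^{n+1}/(cdz_-z_+)$ both quotients $c_+(\ga)/c_-(\ga)$ and $c_+^\dag(\ga)/c_-^\dag(\ga)$ equal $(z_-/z_+)^n$. At this value one has $d\ga z_\pm = q^{n+1}/(cz_\mp)$ and $c\ga z_\pm = q^{n+1}/(dz_\mp)$; applying the reflection identity $\te(1/y;q)=-\te(y;q)/y$ (an immediate consequence of $\te(y;q)=\te(q/y;q)$ together with $(1-1/y)=-(1-y)/y$) and then \eqref{eq:te-product} to strip off the factor $q^{n+1}$, each $\te$ in $c_\pm(\ga),c_\pm^\dag(\ga)$ reduces to $\te(cz_\mp;q)$ or $\te(dz_\mp;q)$, and the prefactors collapse to a common $(z_-/z_+)^n$. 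Since $\psi_\ga,\psi_\ga^\dag$ are linearly independent for generic $c,d$ by Lemma~\ref{lem:Casorati dets}, the identity follows there; at the remaining parameter values it extends by continuity in $(c,d)$.

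The main obstacle is bookkeeping: the $\te$-product and reflection identities produce numerous signs and powers of $q$, and one must verify that the two ratios in (ii) genuinely coincide (if not, the conclusion would fail). One can sidestep this worry by first observing that the vanishing of $D(\phi_\ga^+,\phi_\ga^-)$ already forces scalar proportionality (using Lemma~\ref{lem:Vmu} to transport the relation from $\R_q^+$ to $V_\ga$ and hence to $\R_q^-$), and then pinning down the scalar by evaluating at any convenient point; this makes the computation necessary at only one $x$, at the cost of invoking the spectral structure rather than a direct algebraic check.
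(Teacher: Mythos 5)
Your proposal is correct and is essentially the paper's own proof: part (i) is the same application of the terminating transformation \eqref{eq:transform6} to \eqref{eq:Phi} at $\ga=q^n$ followed by the $\te$-product identity \eqref{eq:te-product}, and part (ii) is exactly the paper's computation of the ratios $c_+(\ga)/c_-(\ga)$ and $c_+^\dag(\ga)/c_-^\dag(\ga)$ at $\ga=q^{n+1}/cdz_-z_+$ combined with Corollary \ref{cor:c-expansion phi} (no linear-independence or continuity argument is even needed there, since equal coefficient ratios give $\phi_\ga^+=(z_-/z_+)^n\phi_\ga^-$ directly). The one step you leave implicit --- why the formula obtained from \eqref{eq:phi+} on $\R_q^+$ is also valid on $\R_q^-$, where $\phi_{q^n}^+$ is only defined through its extension in $V_{q^n}$, so that your ``ratio'' of the $z_+$- and $z_-$-computations is legitimate at a common point $x$ --- is closed in the paper by observing that the computed expression is a polynomial in $x$, hence lies in $V_{q^n}$, and by uniqueness of the extension (Lemma \ref{lem:Vmu}) the same expression holds on all of $\R_q$; this is precisely the ``connecting the two halves through $V_\ga$'' that you flag but should carry out explicitly.
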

\begin{proof}
The first statement follows from applying transformation \eqref{eq:transform6} to the $_1\varphi_1$-series \eqref{eq:Phi} and the $\te$-product identity \eqref{eq:te-product}; for $x \in \R_q^+$,
\[
\begin{split}
\phi_{q^n}^+(x) & = \frac{ (cx;q)_n (q^{n+1};q)_\infty \te(dz_+,cz_+q^n;q) }{ (q/dx;q)_\infty} \rphis{1}{1}{q/cx}{q^{1-n}/cx}{q,\frac{q^{1-n}}{dx}}\\
& = (-1)^n q^{-\frac12n(n-1)} (dz_+)^{-n} (q^{n+1};q)_\infty \te(cz_+,dz_+;q) \rphis{2}{0}{q^{-n},cx}{-}{q,\frac{dq^n}{c}}.
\end{split}
\]
This is a polynomial in $x$, hence it lies in $V_\ga$, so the same expression for $\phi_{q^n}^+(x)$ is valid for $x \in \R_q^-$. Furthermore, it is clearly a multiple of \eqref{eq:ASCIIpol} with $(k,a,z)=(n,c/d,cx)$. Dividing the expression by $(z_+)^{-n} \te(cz_+,dz_+;q)$ makes it independent of $z_-$ and $z_+$, so that $(z_+)^{n} \te(cz_+,dz_+;q)^{-1}  \phi_{q^n}^+ = (z_-)^n \te(cz_-,dz_-;q)^{-1}\phi_{q^n}^-$.

For the second statement, observe that the explicit expression \eqref{eq:c-function} for $c_\pm$ gives
\[
c_+\big( q^{n+1}/cdz_-z_+ \big) = \left( -\frac{ cz_- }{q} \right)^n q^{-\frac12 n(n-1)} \frac{ \te(cz_-,cz_+;q) }{ \te(d/c;q)} = \left(\frac{z_-}{z_+} \right)^n c_-\big( q^{n+1}/cdz_-z_+ \big),
\]
by the $\te$-product identity \eqref{eq:te-product}. Similarly, $c_+^\dag\big( q^{n+1}/cdz_-z_+ \big) = (z_-/z_+)^n c_-^\dag\big( q^{n+1}/cdz_-z_+ \big)$. Then the expansion from Corollary \ref{cor:c-expansion phi} gives the result.
\end{proof}

\subsection{Spectral decomposition} \label{sect:spectraldecomposition}
We can now calculate the resolvent operator for the self-adjoint operator $L$ explicitly. We define the Green kernel $K_\ga:\R_q\times \R_q \to \C$ by
\[
K_\ga(x,y) =
\begin{cases}
\displaystyle \frac{\phi_\ga^-(x) \phi_\ga^+(y)}{D(\ga)}, & x \leq y,\\ \\
\displaystyle \frac{\phi_\ga^-(y) \phi_\ga^+(x)}{D(\ga)}, & x \geq y,
\end{cases}
\]
where $D(\ga) = D(\phi_\ga^+,\phi_\ga^-)$.
Observe that for $x,y \in \R_q$ we have $K_\ga(x,\cdot), K_\ga(\cdot,y) \in \mathcal H$. In order to determine the spectral decomposition for the self-adjoint operator $L$ we need to know the location of  the poles of $\ga \mapsto K_\ga(x,y)$.
\begin{lemma} \label{lem:poles}
For $x,y \in \R_q$, $\ga \mapsto K_\ga(x,y)$ has simple poles in $q^\N \cup (1/cdz_-z_+)q^\Z$, and is analytic on $\C \setminus \big(q^\N \cup (1/cdz_-z_+)q^\Z \cup \{0\}\big)$.
\end{lemma}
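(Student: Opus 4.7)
The plan is to analyze $K_\ga(x,y)$ as a meromorphic function of $\ga$. In either case of its piecewise definition, $K_\ga(x,y) = \phi_\ga^-(u)\phi_\ga^+(v)/D(\ga)$ for appropriate $u,v\in\R_q$, with $D(\ga) = D(\phi_\ga^+,\phi_\ga^-)$. The strategy is to show the numerator is holomorphic on $\C\setminus\{0\}$, then identify the zeros of $D(\ga)$ together with their multiplicities, and finally match orders at each zero.

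The series \eqref{eq:psi} shows that $\psi_\ga(x)$ and $\psi_\ga^\dag(x)$ are entire in $\ga$ for each $x\in\R_q$; the coefficients $c_\pm(\ga), c_\pm^\dag(\ga)$ from \eqref{eq:c-function} are $\te$-products, hence holomorphic on $\C\setminus\{0\}$. By Corollary \ref{cor:c-expansion phi} it follows that $\ga\mapsto\phi_\ga^\pm(x)$ is holomorphic on $\C\setminus\{0\}$, so $K_\ga(x,y)$ is meromorphic on $\C\setminus\{0\}$ with poles only at zeros of $D(\ga)$. Using Lemma \ref{lem:Casorati phi+-} together with $\te(z;q) = (z,q/z;q)_\infty$, I rewrite
\[
D(\ga) = -(1-q)z_+\,\ga\,\te(z_-/z_+;q)\,(q\ga;q)_\infty^{\,2}\,(1/\ga;q)_\infty\,\te(cd\ga z_-z_+;q).
\]
In $\C\setminus\{0\}$ the zeros are: double zeros at $\ga\in q^{-\N-1}$ (from $(q\ga;q)_\infty^{\,2}$), simple zeros at $\ga\in q^\N$ (from $(1/\ga;q)_\infty$), and simple zeros at $\ga\in q^\Z/(cdz_-z_+)$ (from the last theta factor), while $\te(z_-/z_+;q)\neq 0$ since $z_-/z_+<0$. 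At each simple zero lying in $q^\N\cup q^\Z/(cdz_-z_+)$, $K_\ga(x,y)$ has at most a simple pole, which accounts for the two claimed pole families.

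It remains to show that the double zeros of $D(\ga)$ at $\ga = q^{-n-1}$ are removable singularities of $K_\ga(x,y)$. From the proof of Lemma \ref{lem:phi=0} one has both $\psi_{q^{-n-1}} = (c/d)^n \psi_{q^{-n-1}}^\dag$ and $c_\pm(q^{-n-1})(c/d)^n + c_\pm^\dag(q^{-n-1}) = 0$. Rewriting Corollary \ref{cor:c-expansion phi} as
\[
\phi_\ga^\pm(x) = [c_\pm(\ga)(c/d)^n + c_\pm^\dag(\ga)]\,\psi_\ga^\dag(x) + c_\pm(\ga)\,[\psi_\ga(x) - (c/d)^n \psi_\ga^\dag(x)],
\]
each bracketed factor is holomorphic in $\ga$ near $q^{-n-1}$ and vanishes there, so $\phi_\ga^\pm(x) = O(\ga - q^{-n-1})$. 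Consequently $\phi_\ga^-(u)\phi_\ga^+(v) = O((\ga - q^{-n-1})^2)$, cancelling the double zero of $D(\ga)$ and leaving $K_\ga$ analytic at $q^{-n-1}$.

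The main obstacle is this last step: the bare vanishing $\phi_{q^{-n-1}}^\pm = 0$ from Lemma \ref{lem:phi=0} is not by itself sufficient, since $D(\ga)$ has a \emph{double} zero at $q^{-n-1}$. One has to extract the first-order vanishing of both $c_\pm(\ga)(c/d)^n + c_\pm^\dag(\ga)$ and $\psi_\ga - (c/d)^n \psi_\ga^\dag$ at $\ga = q^{-n-1}$ in order to produce a full second-order zero in the numerator; the split above is what makes this quantitative cancellation visible.
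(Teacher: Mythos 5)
Your proof is correct and follows essentially the same route as the paper: holomorphy of $\ga\mapsto\phi_\ga^\pm(x)$ on $\C\setminus\{0\}$ via Corollary \ref{cor:c-expansion phi}, the zeros of $D(\ga)$ (with multiplicities) read off from Lemma \ref{lem:Casorati phi+-} after writing $\te(1/\ga;q)=(1/\ga;q)_\infty(q\ga;q)_\infty$, and cancellation of the double zeros at $q^{-\N-1}$ via Lemma \ref{lem:phi=0}. One correction to your closing remark: the bare vanishing $\phi_{q^{-n-1}}^\pm=0$ \emph{is} by itself sufficient, because the numerator $\phi_\ga^-(u)\,\phi_\ga^+(v)$ is a product of two functions holomorphic near $q^{-n-1}$, each vanishing there, so it automatically vanishes to order at least two; your decomposition into the two bracketed terms re-proves the vanishing of each factor but is not needed to upgrade anything to second order, and this is exactly the paper's shorter argument.
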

\begin{proof}
The poles of $K_\cdot(x,y)$ can come from the poles of $\ga\mapsto \phi_\ga^\pm$ and from zeros of $D(\phi_\ga^+,\phi_\ga^-)$. We use Corollary \ref{cor:c-expansion phi} to see that $\ga \to \phi^\pm_\ga(x)$ is analytic on $\C\setminus\{0\}$. Indeed, from \eqref{eq:psi} it follows that $\ga \mapsto \psi_\ga(x)$ and $\ga \mapsto \psi_\ga^\dag$ are entire functions, and the functions $\ga\mapsto c_\pm(\ga)$ and $\ga \mapsto c_\pm^\dag(\ga)$, see Lemma \ref{lem:expansion Phi psi} for the explicit expressions, are analytic on $\C\setminus\{0\}$.

The location of the zeros of $D(\phi_\ga^+,\phi_\ga^-)$ can be read off from the explicit expression in Lemma \ref{lem:Casorati phi+-}: simple zeros in $q^\N \cup (1/cdz_-z_+)q^\Z$ and double zeros in $q^{-\N-1}$. The double zeros are canceled by the zeros of $\ga\mapsto \phi^+_\ga(x) \phi^-_\ga(y)$, see Lemma \ref{lem:phi=0}, so $\ga\mapsto K_\ga(x,y)$ has simple poles at the simple zeros of $D(\phi_\ga^+,\phi_\ga^-)$.
\end{proof}

We use the Green kernel to describe the resolvent for $L$.
\begin{prop} \label{prop:resolvent}
For $\mu \in \C\setminus\R$ the resolvent $R(\mu)$ for $(L,\mathcal D)$ is given by
\[
R(\mu)f(y) = \big\langle f, \overline{K_{\mu+1}(\,\cdot\,,y)}\big\rangle, \qquad f \in \mathcal H,\ y \in \R_q.
\]
\end{prop}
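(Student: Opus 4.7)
Plan: denote by $\tilde R(\mu)$ the operator defined by the right-hand side; the aim is to show that $\tilde R(\mu) f \in \mathcal D$ and $(L-\mu I)\tilde R(\mu)f = f$ for every $f \in \mathcal H$, which together with the self-adjointness of $(L,\mathcal D)$ identifies $\tilde R(\mu)$ with the resolvent. Abbreviate $\phi^\pm = \phi^\pm_{\mu+1}$, $D = D(\mu+1)$, and set $g = \tilde R(\mu) f$. Splitting the Jackson sum at $x = y$ in the definition of $K_{\mu+1}$ yields
\[
g(y) = \frac{1}{D}\bigl(\phi^+(y)\,\alpha(y) + \phi^-(y)\,\beta(y)\bigr),
\]
with Jackson tails $\alpha(y) = \sum_{x \leq y} f(x)\phi^-(x) w(x)(1-q)|x|$ and $\beta(y) = \sum_{x > y} f(x)\phi^+(x) w(x)(1-q)|x|$. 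For finitely supported $f$, $g$ is a finite linear combination of the $\mathcal H$-functions $K_{\mu+1}(x,\cdot)$, and hence lies in $\mathcal H$.

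The core step is the pointwise identity $(L-\mu I)g(y) = f(y)$. Substituting the expression for $g$ into $Lg$ and invoking $L\phi^\pm = \mu\phi^\pm$, every term in which $\alpha$ or $\beta$ is evaluated at the same argument as the $L$-shift collapses; what remains are the single-term jumps $\alpha(qy)-\alpha(y)$, $\alpha(y/q)-\alpha(y)$, $\beta(qy)-\beta(y)$, $\beta(y/q)-\beta(y)$, each of which is read off immediately from the Jackson-sum definitions. Their combination factors through the Casorati expression $\phi^+(y)\phi^-(qy) - \phi^+(qy)\phi^-(y) = D/v(y)$ (or the analogous one at $y/q$); together with the identity $v(y) = (1-q)y\,A(y)w(y)$ noted after \eqref{eq:Casorati determinant}, every remaining prefactor cancels and one recovers exactly $f(y)$. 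Both cases $y \in \R_q^+$ and $y \in \R_q^-$ are handled the same way, except that on $\R_q^-$ one has $qy > y > y/q$, so the surviving jumps come from $y/q$ rather than $y$ and the sign of $x$ must be tracked through both the measure $(1-q)|x|$ and the identity for $v$; this sign bookkeeping is the only real obstacle.

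For the boundary conditions at $0$, as $y \to 0$ through either half-line both $\alpha(y)$ and $\beta(y)$ converge to the full Jackson sums over $\R_q^-$ and $\R_q^+$ respectively, with limits independent of the side of approach. Combined with $\phi^\pm \in V_{\mu+1}$ (Corollary~\ref{cor:c-expansion phi}), so that $\phi^\pm(0^+) = \phi^\pm(0^-)$ and $(\phi^\pm)'(0^+) = (\phi^\pm)'(0^-)$, and with the observation that the jump contributions to $g$ and $D_q g$ vanish at the origin because $A(y)\to\infty$ there, one deduces $g(0^+) = g(0^-)$ and $g'(0^+) = g'(0^-)$, so $g \in \mathcal D$. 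Thus $\tilde R(\mu) f = R(\mu) f$ on the dense subspace of finitely supported $f$, and the identity extends to all of $\mathcal H$ by density together with the boundedness of $R(\mu)$ (from self-adjointness of $L$) and the continuity of $f \mapsto \tilde R(\mu) f(y)$ for each fixed $y$, which holds because $K_{\mu+1}(\cdot, y) \in \mathcal H$.
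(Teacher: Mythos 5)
Your proposal is correct and takes essentially the same approach as the paper: the paper's proof consists exactly of the remark that one must check $\big((L-\mu)R(\mu)f\big)(y)=f(y)$, delegating the telescoping/Casorati-determinant computation and the domain and density details to \cite[Proposition 6.1]{KS03}. Your write-up simply carries out explicitly the jump bookkeeping (including the sign issue on $\R_q^-$ and the use of $v(x)=(1-q)xA(x)w(x)=(1-q)qxB(qx)w(qx)$) and the extension from finitely supported $f$ that the cited reference supplies.
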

\begin{proof}
The proof boils down to checking that $\big((L-\mu) R(\mu) f \big)(y)=f(y)$, see e.g.~\cite[Proposition 6.1]{KS03}.
\end{proof}

To determine the spectral measure $E$ for the self-adjoint operator $(L,\mathcal D)$ we use, see \cite[Theorem XII.2.10]{DS63},
\begin{equation} \label{eq:Stieltjes-Perron}
\langle E(a,b)f, g \rangle_{\mathcal H} = \lim_{\de \downarrow 0} \lim_{\eps \downarrow 0} \frac{1}{2\pi i} \int_{a + \de}^{b-\de} \Big( \langle R(\mu+i\eps) f,g \rangle_{\mathcal H} - \langle R(\mu-i\eps) f,g \rangle_{\mathcal H} \Big) d\mu,
\end{equation}
for $a<b$ and $f,g \in \mathcal H$.
\begin{thm} \label{thm:spectraldecomposition}
The self-adjoint operator $(L,\mathcal D)$ has discrete spectrum $\mathcal S-1$, with
\[
\mathcal S= q^\N  \cup (1/cdz_-z_+)q^\Z,
\]
and continuous spectrum $\{-1\}$. For $\ga \in \mathcal S$, let $(a,b)$ be an interval such that $(a,b) \cap (\mathcal S-1) = \{\ga-1\}$. Then, for $f,g \in \mathcal H$,
\[
\big\langle E(a,b)f,g\big\rangle = -\frac{1}{d(\ga)} \Res{\ga'=\ga} \frac{1}{D(\ga')} \langle f,\phi_{\ga}^+ \rangle \langle \phi_{\ga}^+,g\rangle,
\]
where
\[
d(\ga) =
\begin{cases}
\displaystyle \left(\frac{z_-}{z_+}\right)^n \frac{\te(cz_+,dz_+;q)}{\te(cz_-,dz_-;q)}, & \text{for}\ \ga = q^n,\ n \in \N,\\ \\
\displaystyle \left( \frac{z_-}{ z_+ } \right)^n, & \text{for}\ \ga = q^{n+1}/cdz_-z_+,\ n \in\Z.
\end{cases}
\]
\end{thm}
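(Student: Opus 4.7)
The plan is to apply the Stieltjes--Perron inversion formula \eqref{eq:Stieltjes-Perron} to the resolvent expression from Proposition \ref{prop:resolvent}. Unfolding that proposition gives
\[
\langle R(\mu)f,g\rangle = \iint_{\R_q\times\R_q} K_{\mu+1}(x,y)\, f(x)\,\overline{g(y)}\, w(x)w(y)\, d_qx\, d_qy,
\]
so the spectrum of $L$ is read off from the poles of $\ga\mapsto K_\ga(x,y)$ (with $\ga=\mu+1$). By Lemma \ref{lem:poles} these are simple and located exactly at $\ga\in\mathcal S$, and $\mathcal S$ accumulates only at $\ga=0$. This forces the spectrum of $L$ to equal $(\mathcal S-1)\cup\{-1\}$, with each $\ga-1$ isolated for $\ga\in\mathcal S$ and with $\{-1\}$ as the only non-isolated point, hence belonging to the continuous (essential) spectrum.

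For the spectral projection formula I would fix an interval $(a,b)$ with $(a,b)\cap(\mathcal S-1)=\{\ga-1\}$ and close the Stieltjes--Perron contour around $\mu_0=\ga-1$ in the standard way. Once the interchange of contour integral and double Jackson sum is justified, the limit reduces to
\[
\langle E(a,b)f,g\rangle = -\Res_{\ga'=\ga}\iint K_{\ga'}(x,y) f(x)\overline{g(y)} w(x)w(y)\, d_qx\, d_qy,
\]
so the problem becomes computing $\Res_{\ga'=\ga}K_{\ga'}(x,y)$ pointwise.

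The decisive simplification is supplied by Lemma \ref{lem:phi+=phi-}: at every $\ga\in\mathcal S$ one has $\phi_\ga^- = \phi_\ga^+/d(\ga)$, so the numerator $\phi_{\ga'}^-(\min(x,y))\phi_{\ga'}^+(\max(x,y))$ of $K_{\ga'}$ collapses at $\ga'=\ga$ to $\phi_\ga^+(x)\phi_\ga^+(y)/d(\ga)$, entirely erasing the $\min/\max$ split. Since $\phi_{\cdot}^\pm$ are analytic at $\ga$ by Corollary \ref{cor:c-expansion phi} and $D(\ga')$ has only a simple zero there, the residue factors as
\[
\Res_{\ga'=\ga} K_{\ga'}(x,y) = \frac{\phi_\ga^+(x)\phi_\ga^+(y)}{d(\ga)}\,\Res_{\ga'=\ga}\frac{1}{D(\ga')}.
\]
Inserting this into the double integral and using that $\phi_\ga^+$ is real-valued on $\R_q$ for real $\ga$, the two factors of $\phi_\ga^+$ separate into $\langle f,\phi_\ga^+\rangle\langle \phi_\ga^+,g\rangle$, producing the stated formula. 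Taking $f=g=\phi_\ga^+$ shows the projection is non-zero, so $\ga-1$ is a genuine eigenvalue with eigenvector $\phi_\ga^+\in\mathcal D$, completing the identification of the point spectrum.

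The main obstacle I expect is the somewhat delicate justification of interchanging the Stieltjes--Perron $\mu$-contour integral with the double Jackson sum over $\R_q\times\R_q$. This requires showing that $\ga'\mapsto K_{\ga'}$ is continuous into the tensor-product Hilbert space $\mathcal H\otimes\mathcal H$ on a small punctured disk around each $\ga\in\mathcal S$; uniform bounds obtained by comparing the asymptotics of $\phi_{\ga'}^\pm$ (Lemma \ref{lem:asymptotics}) with the decay of $w$ at $\pm\infty$ (equation \eqref{eq:asymptotics w}) should provide the necessary dominant majorant, after which the Cauchy--Schwarz estimate $|\langle f,\phi_{\ga'}^+\rangle|\,\|\phi_{\ga'}^+\|$ closes the argument.
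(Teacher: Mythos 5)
Your overall route is the same as the paper's: express $\langle R(\mu)f,g\rangle$ as a double Jackson integral against the Green kernel, invoke the Stieltjes--Perron formula \eqref{eq:Stieltjes-Perron}, locate the contributions at the simple poles of $\ga\mapsto K_\ga(x,y)$ (Lemma \ref{lem:poles}), and compute the residue after collapsing $\phi_{\ga}^-=\phi_\ga^+/d(\ga)$ via Lemma \ref{lem:phi+=phi-}, which factors the projection into $\langle f,\phi_\ga^+\rangle\langle\phi_\ga^+,g\rangle$ with the correct sign from the clockwise contour. That part of your argument is correct and matches the paper.

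There is, however, a genuine gap in your treatment of the point $-1$. You argue that $-1$ is the only non-isolated point of the spectrum and conclude from this that it lies in the continuous spectrum. Non-isolation only places $-1$ in the essential spectrum; an accumulation point of eigenvalues can perfectly well be an eigenvalue itself (an embedded eigenvalue), so the statement ``continuous spectrum $\{-1\}$'' requires a separate argument that $-1$ is \emph{not} an eigenvalue of $(L,\mathcal D)$. The paper supplies this: any $f$ with $Lf=-f$ restricts on $\R_q^+$ to a linear combination of $\psi_0(x)=(cq/d,dx;q)_\infty$ and $\psi_0^\dag(x)=(dq/c,cx;q)_\infty$, and using the $\te$-product identity together with \eqref{eq:asymptotics w} one gets $|(cx;q)_\infty|^2 w(x)\,x=\mathcal O(|qd/c|^k)$ as $x=z_+q^k$, $k\to-\infty$; the standing assumptions on $c,d$ give $|d/c|<q^{-1}$, so neither function is square-integrable and no eigenfunction exists at eigenvalue $-1$. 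You need to add this (or an equivalent) argument; without it the identification of $\{-1\}$ as continuous spectrum is unproved. Your concern about interchanging the contour integral with the double Jackson sum is legitimate but minor, and is handled exactly as you sketch.
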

\begin{proof}
We have
\[
\begin{split}
\langle R(\ga-1) f,g \rangle &= \iint\limits_{\R_q \times \R_q} f(x) \overline{g(y)} K_\ga(x,y) w(x) w(y)\, d_qx\, d_qy \\
& = \iint\limits_{\substack{(x,y) \in \R_q \times \R_q \\ x \leq y }} \frac{ \phi_\ga^-(x) \phi_\ga^+(y) }{D(\ga)} \Big( f(x) \overline{g(y)} + f(y) \overline{g(x)} \Big) \Big(1-\frac12 \de_{x,y}\Big) w(x) w(y)\, d_qx\, d_qy,
\end{split}
\]
so from \eqref{eq:Stieltjes-Perron} and Lemma \ref{lem:poles} we see that the only values of $\ga \in \R$ that contribute to the spectral measure $E$ are the poles of $\ga \mapsto K_\ga(x,y)$, i.e.,~$\ga \in q^\N \cup (1/cdz_-z_+) q^\Z$. In this case we know that  $\phi_\ga^+= d(\ga) \phi_\ga^-$ where $d(\ga)$ is the explicit factor from Lemma \ref{lem:phi+=phi-}, so $\phi_\ga^+ \in \mathcal H$, which implies that $\phi_\ga^+$ is an eigenfunction of $L$ for eigenvalue $\ga-1$. We fix such a $\ga$ and we choose $a$ and $b$ such that $(a,b) \cap (\mathcal S-1) = \{\ga-1\}$, then by \eqref{eq:Stieltjes-Perron} we have $\langle E(a,b) f,g\rangle = \frac{1}{2\pi i} \int_{\mathcal C} \langle R(\mu)f,g\rangle d\mu$, where $\mathcal C$ is a clockwise oriented, rectifiable contour encircling $\ga-1$ once. From Cauchy's theorem we find
\begin{multline*}
\langle E(a,b) f,g\rangle =\\ -\frac{1}{d(\ga)} \Res{\ga'=\ga} \frac{1}{D(\ga')} \iint\limits_{\substack{(x,y) \in \R_q \times \R_q \\ x \leq y }} \phi_\ga^+(x) \phi_\ga^+(y) \Big( f(x) \overline{g(y)} + f(y) \overline{g(x)} \Big) \Big(1-\frac12 \de_{x,y}\Big) w(x) w(y)\, d_qx\, d_qy,
\end{multline*}
where the minus sign comes from the negative orientation of the contour $\mathcal C$. Then the expression for the spectral measure follows from symmetrizing the double $q$-integral.

It remains to show that $-1$ is in the continuous spectrum. First note that $-1$ is in the closure of $\mathcal S-1$, so $-1$ is either in the discrete spectrum or in the continuous spectrum. Suppose $f \in F(\R_q)$ satisfies $Lf=-f$, then the restriction of $f$ to $\R_q^+$ is a linear combination of $\psi_0$ and $\psi_0^\dag$. From \eqref{eq:psi} we see that $\psi_0(x) = (cq/d,dx;q)_\infty$ and $\psi_0^\dag =  (dq/c,cx;q)_\infty$. Using the $\te$-product identity and \eqref{eq:asymptotics w}, we obtain
\[
|(cx;q)_\infty|^2w(x)\, x = \mathcal O( |qd/c|^k ), \qquad x = z_+ q^k, \ k \to -\infty.
\]
From the conditions on $c$ and $d$ we know that $|d/c|<q^{-1}$, so that $(cx;q)_\infty \not\in \mathcal H$. Similary, $(dx;q)_\infty \not\in \mathcal H$. We conclude that $f \not\in \mathcal H$, so $-1$ is not an eigenvalue of $(L,\mathcal D)$.
\end{proof}

\subsection{Orthogonality relations} \label{ssec:orthogonality}
From Theorem \ref{thm:spectraldecomposition} we obtain orthogonality relations for the functions $\phi_\ga^+$.
\begin{prop} \label{prop:orthogonality phi+}
The set $\{ \phi_\ga^+ \}_{\ga \in \mathcal S}$ is an orthogonal basis for $\mathcal H$, with squared norm given by
\[
\| \phi_\ga^+ \|^2 = -d(\ga) \left( \Res{\ga'=\ga} \frac{1}{D(\ga')} \right)^{-1}.
\]
\end{prop}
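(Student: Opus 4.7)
The plan is to extract both orthogonality and completeness directly from the spectral decomposition established in Theorem \ref{thm:spectraldecomposition}, and to read off the norm formula from the idempotency of the spectral projections.

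First, I would observe that Lemma \ref{lem:phi+=phi-} shows $\phi_\ga^+ \in \mathcal H$ for each $\ga \in \mathcal S$, and that each such $\phi_\ga^+$ is an eigenfunction of the self-adjoint operator $(L,\mathcal D)$ for the eigenvalue $\ga-1$. Orthogonality $\langle \phi_\ga^+, \phi_{\ga'}^+\rangle=0$ for distinct $\ga, \ga' \in \mathcal S$ then follows at once from self-adjointness together with $\ga-1 \neq \ga'-1$.

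For the norm formula, I would apply Theorem \ref{thm:spectraldecomposition} with $f=g=\phi_\ga^+$ and an interval $(a,b)$ isolating $\ga-1 \in \mathcal S-1$. Since $\phi_\ga^+$ lies in the eigenspace for $\ga-1$, the spectral projection fixes it: $E(a,b)\phi_\ga^+ = \phi_\ga^+$, so $\|\phi_\ga^+\|^2 = \langle E(a,b)\phi_\ga^+,\phi_\ga^+\rangle$. Inserting this into the formula of Theorem \ref{thm:spectraldecomposition} yields a relation of the form $\|\phi_\ga^+\|^2 = -d(\ga)^{-1}\,\Res{\ga'=\ga}\tfrac{1}{D(\ga')}\, \|\phi_\ga^+\|^4$, and dividing by $\|\phi_\ga^+\|^2 \neq 0$ rearranges immediately to the stated expression.

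The main step is completeness. My approach is to show that the spectral measure $E$ of $(L,\mathcal D)$ is purely atomic and concentrated on $\mathcal S-1$. Theorem \ref{thm:spectraldecomposition} identifies each atom $E(\{\ga-1\})$ for $\ga \in \mathcal S$ as the nonzero rank-one orthogonal projection onto $\C\phi_\ga^+$, so the only conceivable remaining spectral mass is at the single accumulation point $-1$. By the final part of the proof of Theorem \ref{thm:spectraldecomposition}, $-1$ is not an eigenvalue of $L$, so $E(\{-1\})=0$; and no nontrivial absolutely continuous or singular continuous part of $E$ can be supported on a single point. The resolution of the identity $f = \sum_{\ga \in \mathcal S} E(\{\ga-1\}) f$ then expresses every $f \in \mathcal H$ as $\sum_{\ga \in \mathcal S} \langle f,\phi_\ga^+\rangle \phi_\ga^+/\|\phi_\ga^+\|^2$, establishing the orthogonal basis property. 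The delicate point I expect to have to argue carefully is precisely this ruling out of hidden continuous spectral mass at the accumulation point $-1$, despite its being the limit of the discrete eigenvalues $q^n-1$ and $(q^{n+1}/cdz_-z_+)-1$; everything else is a direct consequence of the already-proved spectral theorem.
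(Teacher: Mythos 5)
Your proposal is correct and follows essentially the same route as the paper: orthogonality from self-adjointness of $(L,\mathcal D)$, the norm formula by taking $f=g=\phi_\ga^+$ in Theorem \ref{thm:spectraldecomposition} and dividing, and completeness from the fact that the spectral measure is concentrated on the atoms $\mathcal S-1$ (with $E(\{-1\})=0$ since $-1$ is not an eigenvalue). The only cosmetic difference is in the completeness step: the paper pairs $E(\R)f$ against the point-mass functions $f_x(y)=\de_{xy}/(|x|w(x))$ to show that any $f$ orthogonal to all $\phi_\ga^+$ vanishes pointwise, whereas you invoke the resolution of the identity $f=\sum_{\ga\in\mathcal S}E(\{\ga-1\})f$ directly; both rest on exactly the same spectral information.
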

\begin{proof}
Since $L$ is self-adjoint, eigenfunctions for different eigenvalues are pairwise orthogonal. Let $\ga \in \mathcal S$ and choose $a$ and $b$ such that  $(a,b)\cap (\mathcal S-1) = \{\ga-1\}$. Taking $f=g = \phi_\ga^+$  in Theorem \ref{thm:spectraldecomposition} gives
\[
\| \phi_\ga^+ \|^2 =- \frac{1}{d(\ga)} \Res{\ga'=\ga} \frac{1}{D(\ga')}\| \phi_\ga^+\|^4,
\]
from which the expression for the squared norm follows.

Next we define for $x\in \R_q$ the function $f_x \in \mathcal H$ by $f_x(y)=\de_{xy}/ (|x|w(x))$ for all $y \in \R_q$, then $f(x) = \langle f,f_x\rangle$ for all $f \in \mathcal H$. Now assume that for some $f \in \mathcal H$ we have $\langle f,\phi_\ga^+\rangle=0$ for all $\ga \in \mathcal S$. Then by Theorem \ref{thm:spectraldecomposition}
\[
f(x) = \langle f,f_x \rangle = \langle E(\R) f, f_x \rangle = \sum_{\ga \in \mathcal S} \langle f, \phi_\ga^+ \rangle \phi^+_\ga(x) \frac{-1}{d(\ga)} \Res{\ga'=\ga} \frac{1}{D(\ga')}=0,
\]
for all $x \in \R_q$. We conclude that $\{\phi_\ga^+\}_{\ga \in \mathcal S}$ is complete in $\mathcal H$.
\end{proof}

We reformulate the orthogonality relations for $\phi_\ga^+$ in terms of the following functions.
We define
\[
P_n(x) = (-c)^n q^{-\frac12 n(n-1)} \rphis{2}{0}{q^{-n},cx}{-}{q,\frac{dq^n}{c}},
\qquad n \in \N,
\]
as in the introduction, and
\[
Q_n(x) = (-d)^{n} q^{-\frac12n(n+1)} \frac{(cx;q)_\infty \te(dz_-,dz_+;q) }{ (q/dx, q^{n+1}x/dz_-z_+;q)_\infty} \rphis{1}{1}{q/cx}{dz_-z_+ q^{-n}/x}{q,\frac{cz_-z_+ q^{-n}}{x}}, \qquad n \in \Z.
\]
Recall that $P_n$ is symmetric in $c$ and $d$. Furthermore, $Q_n$ is symmetric in $c$ and $d$, and in $z_-$ and $z_+$. These functions are related to $\phi_\ga^+$ by
\begin{gather*}
\phi_{q^n}^+(x) = (cdz_+)^{-n} (q^{n+1};q)_\infty \te(cz_+,dz_+;q)\, P_n(x),\\
\phi_{q^{n+1}/cdz_-z_+}^+(x) = (z_-)^n (q^{n+2}/cdz_-z_+;q)_\infty \, Q_n(x),
\end{gather*}
see Lemma \ref{lem:phi+=phi-}.
\begin{thm} \label{thm:orthogonalityPQ}
The set $\{P_n\}_{n \in \N} \cup \{Q_n\}_{n \in \Z}$ is an orthogonal basis for $\mathcal H$. In particular, the following orthogonality relations hold:
\begin{align*}
\frac{1}{1-q}\int_{\R_q} P_m(x) P_n(x) \frac{1}{(cx,dx;q)_\infty} \,d_q x &= \de_{mn} (q;q)_n(cd)^{n} q^{-n^2} \frac{z_+\,(q;q)_\infty\te(z_-/z_+,cdz_-z_+;q)}{\te(cz_-,dz_-,cz_+,dz_+;q)},\\
\frac{1}{1-q}\int_{\R_q} Q_m(x) Q_n(x) \frac{1}{(cx,dx;q)_\infty} \,d_q x &= \de_{mn} \frac{  (cdz_-z_+q^{-n-1};q)_\infty }{(-z_-z_+)^{n} q^{\frac12 n(n+1)}}\,z_+ (q;q)_\infty^2 \te(z_-/z_+;q),\\
\frac{1}{1-q}\int_{\R_q} P_m(x) Q_n(x) \frac{1}{(cx,dx;q)_\infty} \,d_q x &=0.
\end{align*}
\end{thm}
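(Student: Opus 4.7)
\medskip
\noindent\textbf{Proof strategy.} The plan is to deduce Theorem \ref{thm:orthogonalityPQ} directly from Proposition \ref{prop:orthogonality phi+}, using the two identifications displayed immediately before the theorem statement. Recall $\mathcal S = q^\N \cup (1/cdz_-z_+)q^\Z$, and for generic values of the parameters these two pieces are disjoint. Lemma \ref{lem:phi+=phi-}, combined with the definitions of $P_n$ and $Q_n$, exhibits each $\phi_{q^n}^+$ ($n \in \N$) as a nonzero scalar multiple of $P_n$, and each $\phi_{q^{n+1}/cdz_-z_+}^+$ ($n \in \Z$) as a nonzero scalar multiple of $Q_n$. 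Under this dictionary the orthogonal basis $\{\phi_\ga^+\}_{\ga \in \mathcal S}$ of $\mathcal H$ from Proposition \ref{prop:orthogonality phi+} becomes precisely the proposed basis $\{P_n\}_{n\in\N} \cup \{Q_n\}_{n \in \Z}$, yielding at once the three pairwise orthogonality statements $\langle P_m,P_n\rangle = 0$ ($m\neq n$), $\langle Q_m,Q_n\rangle = 0$ ($m\neq n$), $\langle P_m,Q_n\rangle = 0$, as well as completeness.

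The only substantive work remaining is to evaluate the squared norms explicitly. For this I would substitute the scalar multipliers of Lemma \ref{lem:phi+=phi-} into
\[
\|\phi_\ga^+\|^2 = -d(\ga) \left(\Res{\ga'=\ga} \frac{1}{D(\ga')}\right)^{-1},
\]
where, by Lemma \ref{lem:Casorati phi+-},
\[
D(\ga) = -(1-q)z_+\ga(q\ga;q)_\infty \te(z_-/z_+;q)\te(1/\ga;q)\te(cd\ga z_-z_+;q).
\]
At $\ga = q^n$ the simple zero of $D$ is carried by $\te(1/\ga;q)$, and at $\ga = q^{n+1}/cdz_-z_+$ by $\te(cd\ga z_-z_+;q)$. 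In each case I would apply the $\te$-product identity \eqref{eq:te-product} to transport the zero of the vanishing theta factor to $y=1$, and then expand $\te(y;q) = -(q;q)_\infty^2(y-1)+O((y-1)^2)$ to read off the residue. Inserting this residue together with the value of $d(\ga)$ from Theorem \ref{thm:spectraldecomposition}, and then applying \eqref{eq:te-product} once more to absorb the constants coming from the scalar multipliers, will produce exactly the right-hand sides stated in the theorem.

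The main potential obstacle is purely bookkeeping: the residue computations accumulate signs and powers of $q$ from several applications of \eqref{eq:te-product}, and these must combine into the compact closed forms in the theorem. No new analytic input beyond Proposition \ref{prop:orthogonality phi+} and Lemma \ref{lem:phi+=phi-} is required; the theorem is essentially the translation of the spectral decomposition of $L$ into the $P_n, Q_n$ basis.
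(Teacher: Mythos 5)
Your proposal matches the paper's own proof: the paper likewise deduces everything from Proposition \ref{prop:orthogonality phi+} together with the identification of $\phi_{q^n}^+$ and $\phi_{q^{n+1}/cdz_-z_+}^+$ as explicit nonzero multiples of $P_n$ and $Q_n$ (Lemma \ref{lem:phi+=phi-}), and obtains the squared norms from the residues of $1/D(\ga)$ via Lemma \ref{lem:Casorati phi+-} and the factor $d(\ga)$. The only difference is that the paper states the two residue values explicitly, which the theta-expansion bookkeeping you outline does reproduce.
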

\begin{proof}
This follows from Proposition \ref{prop:orthogonality phi+}. The explicit squared norms follow from a straightforward residue calculation using the explicit expression for $D(\ga)$ from Lemma \ref{lem:Casorati phi+-};
\begin{multline*}
\left( \Res{\ga=q^{n+1}/cdz_-z_+} \frac{1}{D(\ga)} \right)^{-1}=\\
(1-q)z_+ (-1)^{n+1} q^{-\frac12n(n+1)} (q,q,q^{n+2}/cdz_-z_+;q)_\infty \te(z_-/z_+, cdz_-z_+q^{-1-n}),
\end{multline*}
and
\[
\left( \Res{\ga=q^n} \frac{1}{D(\ga)} \right)^{-1}= (1-q)z_+ (-1)^{n+1} q^{\frac12n(n-1)}q^{-n^2} (q,q,q^{n+1};q)_\infty \te(z_-/z_+,cdz_-z_+q^n).
\qedhere
\]
\end{proof}
Observe that (the proof of) Theorem \ref{thm:orthogonalityPQ} gives an alternative proof of Theorem \ref{thm:orthogonalitypolynomials} and the evaluation formula \ref{eq:beta-integral}.
\begin{remark} \label{rem:qHermite}
The polynomials $\widetilde h_n(x;q) = i^{-n} V_n^{(-1)}(ix;q)$ are known as discrete $q$-Hermite polynomials of type II, see \cite{KLS}. So for $c= i a$ with $a \in \R$ (in this case $d=- ia$), Theorem \ref{thm:orthogonalityPQ} gives solutions for the indeterminate moment problem corresponding to the discrete $q$-Hermite II polynomials, as well as a set of functions that complement them to an orthogonal basis of the corresponding Hilbert space. If $z_-=- z_+$ the orthogonality relations from Theorem \ref{thm:orthogonalityPQ} correspond to the orthogonality relations given in \cite{KLS}. In this case the solutions for the moment problem can be obtained from Ramanujan's $_1\psi_1$-summation formula, see \cite{B}.
\end{remark}

\section{Orthogonality relations on $[1/c,\infty)$} \label{sect:classicorthogonality}
In this section we obtain orthogonality relations from spectral analysis of the Al-Salam--Carlitz II second order $q$-difference operator acting on a Hilbert space $H$ which is essentially $\mathcal H$ from Section \ref{sect:orthogonality on R} with $z_-=0$ and $z_+=1/c$. The calculations are similar to the ones in Section \ref{sect:orthogonality on R}, which is why most of the details are omitted.
\\

We assume in this section $c>0$ and $d>0$, and we set $z_+=\frac1c$ and $z_-=0$. We consider $L$ as a bounded operator on the Hilbert space $H$ consisting of complex-valued functions $f$ on the $q$-interval $I_q = (1/c)q^{-\N}$ with inner product
\[
\langle f,g\rangle_{H} = \int_{I_q} f(x)\overline{g(x)} W(x) \,d_q x.
\]
Here
\[
\int_{I_q} f(x)\, d_q x = (1-q) \sum_{k=0}^\infty f(q^{-k}/c) \frac{1}{cq^{k}},
\]
and $W$ is a normalized version of the weight function $w$ \eqref{eq:def w} restricted to $I_q$:
\[
W(q^{-k}/c) = \te(cz_+,dz_+;q) w(z_+ q^{-k}) \big|_{z_+=1/c} = \left(\frac{c}{d}\right)^{k} q^{k(k+1)}(cq^{k+1}/d,q^{k+1};q)_\infty, \qquad k \in \N.
\]
Note that $\te(cz_+,dz_+;q) w(x) \big|_{z_+=1/c}=0$ for $x \in (1/c)q^{\N+1}$. The evaluation of the corresponding $q$-beta integral can be obtained as a limit case of \eqref{eq:beta-integral};
\[
\int_{I_q} W(x) \,d_q x = \frac{1-q}{c}(q;q)_\infty.
\]
This can also be written as a $_0\varphi_1$-summation formula, which is a limit case of the $_1\varphi_1$-summation formula \cite[(II.8)]{GR}.

The second order $q$-difference operator $L:F(I_q) \to F(I_q)$ is defined in this case as follows. For $x \in I_q\setminus\{1/c\} =  (1/c) q^{-\N-1}$, $(Lf)(x)$ is still defined by \eqref{eq:L}, and we define
\begin{equation} \label{eq:boundary condition}
(Lf)(1/c) = B(1/c)\Big(f(1/cq)-f(1/c)\Big).
\end{equation}
Since $A(1/c) =0$, this is the natural definition for $L$. We denote the Casorati determinant corresponding to $L$ by $\widetilde D$, i.e., for $f,g \in F(I_q\setminus\{1/c\})$,
\[
\widetilde D(f,g)(q^{-k}/c) = \te(cz_+,dz_+;q) D(f,g)(z_+q^{-k})\big|_{z_+=1/c}, \qquad k \in \N_{\geq 1},
\]
where $D$ is defined by \eqref{eq:Casorati determinant}.

We also need to normalize the eigenfunctions in a different way. We define for $k \in \N$,
\[
\begin{split}
\Psi_\ga(q^{-k}/c) &=  \left.\frac{\psi_\ga(z_+ q^{-k})}{\te(dz_+;q)} \right|_{z_+=1/c}= q^{-\frac12k(k+1)} \left( -\frac{d}{c} \right)^k \frac{(\ga q^{1-k};q)_\infty}{(cq^{1+k}/d;q)_\infty } \rphis{1}{1}{q\ga}{\ga q^{1-k}}{q, \frac{cq}{d}},\\
\Psi_\ga^\dag(q^{-k}/c) &=  \left.\frac{\psi_\ga^\dag(z_+ q^{-k})}{\te(cz_+;q)} \right|_{z_+=1/c}= (-1)^k q^{-\frac12k(k+1)}  \frac{(d\ga q^{1-k}/c;q)_\infty}{(q^{1+k};q)_\infty } \rphis{1}{1}{q\ga}{d\ga q^{1-k}/c}{q, \frac{dq}{c}},
\end{split}
\]
and
\[
\phi_\ga(q^{-k}/c) = \left.\frac{\phi_\ga^+(z_+q^{-k})}{\te(cz_+,dz_+;q)}\right|_{z_+=1/c}.
\]
The expansion from Corollary \ref{cor:c-expansion phi} shows that
\begin{equation} \label{eq:Cexpansion}
\phi_\ga(x) = C(\ga) \Psi_\ga(x) + C^\dag(\ga) \Psi_\ga^\dag(x),\qquad x \in I_q,
\end{equation}
with
\[
C(\ga) = \frac{ \te(d\ga/c;q)}{\te(d/c;q)}, \qquad C^\dag(\ga) = \frac{\te(1/\ga;q)}{\te(c/d;q)}.
\]
Note that the functions $\Psi_\ga$ and $\Psi_\ga^\dag$ are eigenfunctions of $L$ on $I_q\setminus \{1/c\}$, but it should still be checked whether they satisfy the boundary condition $Lf(1/c) = (\ga-1)f(1/c)$, see \eqref{eq:boundary condition}. Using \eqref{eq:transform6} and \eqref{eq:te-product} we find
\[
\Psi_\ga(q^{-k}/c) = (q\ga;q)_\infty \rphis{2}{0}{q^{-k},1/\ga}{-}{q,\frac{d\ga}{c}}, \qquad k \in \N.
\]
With this expression it is easy to verify that $(L\Psi_\ga)(1/c)=(\ga-1)\Psi_\ga(1/c)$, so $\Psi_\ga$ does satisfy the boundary condition.

The operator $L$ is a self-adjoint operator on $H$. We calculate the spectral measure $E$ for $L$ in the same way as in Section \ref{sect:spectraldecomposition}.
In this case we define the Green kernel by
\[
K_\ga(x,y) =
\begin{cases}
\displaystyle \frac{\Psi_\ga(x) \phi_\ga(y)}{\widetilde D(\ga)}, & x \leq y,\\ \\
\displaystyle \frac{\Psi_\ga(y) \phi_\ga(x)}{\widetilde D(\ga)}, & x \geq y,
\end{cases}
\]
for $x,y \in I_q$ and where $\widetilde D(\ga) = \widetilde D(\phi_\ga,\Psi_\ga)$. The resolvent operator for $L$ is then given in terms of the Green kernel in the same way as in Proposition \ref{prop:resolvent}.

From Lemma \ref{lem:Casorati dets} we obtain
\[
\widetilde D(\ga) = \left.\frac{D(\phi_\ga^+,\psi_\ga)}{\te(dz_+;q)}\right|_{z_+=1/c} = -\frac{(1-q) \ga}{c} (q\ga;q)_\infty \te(1/\ga;q),
\]
and from this expression we see that $\phi_\ga$ and $\Psi_\ga$ are linear independent solutions of $Lf=(\ga-1)f$ for $\ga \in \C \setminus\big(q^\Z \cup \{0\}\big)$. Note that $\phi_\ga = \Psi_\ga = 0$ for $\ga \in q^{-\N-1}$, which follows from Lemma \ref{lem:phi=0} and linear dependence. Furthermore, for $\ga \in q^\N$ we have $\phi_\ga = C(\ga) \Psi_\ga$ by \eqref{eq:Cexpansion}, since $C^\dag(\ga)=0$ in this case. Since $\ga \mapsto \phi_\ga(x)$ and $\ga \mapsto \Psi_\ga(x)$ are entire functions, the function $\ga \mapsto K_\ga(x,y)$, $x,y \in I_q$, has simple poles in $q^\N$ and is analytic on $\C \setminus \big(q^\N \cup \{0\}\big)$. In the same way as in Theorem \ref{thm:spectraldecomposition} we can now calculate the spectral measure $E$, and as in Proposition \ref{prop:orthogonality phi+} this leads to orthogonality relations in $H$ for $\phi_{q^n}$. Using Lemma \ref{lem:phi+=phi-} we have
\[
\phi_{q^n}(x) = (q^{n+1};q)_\infty V_n^{(c/d)}(cx;q), \qquad x \in I_q,
\]
so we obtain orthogonality relations for Al-Salam--Carlitz II polynomials.
\begin{thm}\* \label{thm:orthogonality2}
\begin{enumerate}[(i)]
\item The self-adjoint operator $L$ on $H$ has discrete spectrum $q^\N-1$ and continuous spectrum $\{-1\}$. For $\ga \in q^\N$, let $(a,b)$ be an interval such that $(a,b)\cap (q^\N-1) = \{\ga-1\}$, then the spectral measure $E$ of $L$ is given by
\[
\langle E(a,b)f,g\rangle_H = -\frac{1}{C(\ga)} \Res{\ga'=\ga} \frac{1}{\widetilde D(\ga')}\langle f,\phi_\ga \rangle_H \langle \phi_\ga,g \rangle_H, \qquad f,g \in H.
\]
\item The set $\{V_n^{(c/d)}(c\,\cdot\,;q)\}_{n \in \N}$ is an orthogonal basis for $H$ with squared norm
    \[
    \| V_n^{(c/d)}(c\,\cdot\,;q) \|^2_H =\frac{1-q}{c}(q;q)_\infty \left(\frac{c}{d}\right)^n q^{-n^2} (q;q)_n.
    \]
\end{enumerate}
\end{thm}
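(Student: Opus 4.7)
My plan is to mirror the three-step structure of Section \ref{sect:orthogonality on R} in the simpler one-sided setting, leveraging the preparatory work done just before the theorem (explicit $\widetilde D(\ga)$, the confinement of simple poles of $K_\ga$ to $q^\N$, and the degeneracy $\phi_\ga = C(\ga)\Psi_\ga$ for $\ga \in q^\N$). The first step is to establish a resolvent formula $R(\mu)f(y) = \langle f, \overline{K_{\mu+1}(\cdot\,,y)} \rangle_H$ as a direct analogue of Proposition \ref{prop:resolvent}. The one subtlety is the boundary point $x = 1/c$: the modified definition \eqref{eq:boundary condition}, together with $A(1/c)=0$, is precisely what kills the endpoint contribution in the telescoping computation for $\langle Lf,g\rangle_H - \langle f,Lg\rangle_H$, so the same proof that gives Proposition \ref{prop:resolvent} goes through.

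Once the resolvent is in hand, plugging into the Stieltjes--Perron formula \eqref{eq:Stieltjes-Perron} and applying Cauchy's theorem around each simple pole at $\ga = q^n$ localizes $\langle E(a,b)f,g\rangle_H$ to a residue. Symmetrizing the resulting double $q$-integral and inserting $\phi_{q^n} = C(q^n)\Psi_{q^n}$ (valid on all of $I_q$, since $C^\dag(q^n) = \te(q^{-n};q)/\te(c/d;q) = 0$ for $n \in \N$) yields the formula in (i). For the continuous-spectrum claim, $-1$ is an accumulation point of $q^\N - 1$ and hence lies in the spectrum, so the issue reduces to excluding it from the point spectrum. Any $L$-eigenfunction for eigenvalue $-1$ is a linear combination of $\psi_0(x) = (cq/d,dx;q)_\infty$ and $\psi_0^\dag(x) = (dq/c,cx;q)_\infty$; combining their behavior at $x = q^{-k}/c$ as $k \to \infty$ with the asymptotics of $W$ and the standing positivity conditions (which force $q < d/c < 1/q$ when $z_+ = 1/c$) gives geometric divergence of both norm sums.

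Part (ii) then follows quickly from (i): by Lemma \ref{lem:phi+=phi-} the eigenfunction $\phi_{q^n}$ is a nonzero scalar multiple of $V_n^{(c/d)}(c\,\cdot\,;q)$, so orthogonality for distinct $n$ comes from self-adjointness, and completeness from the point-evaluation trick of Proposition \ref{prop:orthogonality phi+}. The explicit squared norm drops out of a residue calculation at $\ga = q^n$ in $1/\widetilde D(\ga)$, for which the decisive input is
\[
\te'(q^{-n};q) = (-1)^{n+1} q^{-n(n-1)/2}(q;q)_\infty^2,
\]
derivable by iterating the relation $\te'(xq;q) = -\te'(x;q)/(qx)$ at the zeros of $\te(\cdot;q)$ starting from $\te'(1;q) = -(q;q)_\infty^2$. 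The main genuinely mathematical step is the non-eigenvalue argument for $-1$; the rest amounts to careful bookkeeping of signs and $q$-powers in the residue.
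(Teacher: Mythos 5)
Your outline is the same route the paper intends (it omits the details and refers back to Section \ref{sect:orthogonality on R}), and most of the concrete ingredients check out: $v(1/c)\propto A(1/c)=0$ does kill the endpoint Casorati term so the resolvent formula goes through; the localization via \eqref{eq:Stieltjes-Perron} and Cauchy's theorem at the simple poles $\ga=q^n$ is right; $\phi_{q^n}=C(q^n)\Psi_{q^n}$ on all of $I_q$ since $C^\dag(q^n)=\te(q^{-n};q)/\te(c/d;q)=0$; and your residue input $\te'(q^{-n};q)=(-1)^{n+1}q^{-n(n-1)/2}(q;q)_\infty^2$, combined with $C(q^n)=(-c/d)^nq^{-n(n-1)/2}$, $\widetilde D(\ga)=-\frac{(1-q)\ga}{c}(q\ga;q)_\infty\te(1/\ga;q)$ and $\phi_{q^n}=(q^{n+1};q)_\infty V_n^{(c/d)}(c\,\cdot\,;q)$, indeed reproduces the stated squared norm.

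The one step that fails as written is the exclusion of $-1$ from the point spectrum. On $I_q=(1/c)q^{-\N}$ the second solution $\psi_0^\dag(x)=(dq/c;q)_\infty(cx;q)_\infty$ vanishes identically, because $(cx;q)_\infty=(q^{-k};q)_\infty=0$ at $x=q^{-k}/c$; so its ``norm sum'' is zero rather than geometrically divergent, and the restrictions of $\psi_0,\psi_0^\dag$ span only a one-dimensional piece of the two-dimensional space of solutions of the interior recurrence on $I_q$. Thus ``any eigenfunction is a combination of $\psi_0$ and $\psi_0^\dag$ and both norm sums diverge'' does not carry the argument. The correct (and short) repair uses the boundary relation \eqref{eq:boundary condition}: since $B(1/c)\neq0$, a function satisfying $Lf=-f$ on all of $I_q$ is determined by $f(1/c)$, so the algebraic eigenspace is one-dimensional and spanned by $\Psi_0$, which satisfies the boundary condition; explicitly $\Psi_0(q^{-k}/c)=(-d/c)^kq^{-k(k+1)/2}(cq/d;q)_k$, and the terms of $\|\Psi_0\|_H^2$ behave like $\big(d/(cq)\big)^k$ as $k\to\infty$, which diverges because positivity of $W$ forces $cq/d<1$, i.e. $d/c>q$. (Note that positivity does not force the upper bound $d/c<1/q$ you invoke; that inequality only enters the indeterminacy remark after the theorem, not the theorem itself.) With this correction, the rest of your argument—orthogonality from self-adjointness, completeness via the point-evaluation functions as in Proposition \ref{prop:orthogonality phi+}, and the residue bookkeeping—matches the paper's (omitted) proof.
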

\begin{remark}
The (Hamburger) moment problem corresponding to the Al-Salam--Carlitz II polynomials $V_n^{(a)}$ is indeterminate if $q<a < 1/q$. So in case $q<c/d<1/q$, Theorem \ref{thm:orthogonality2} provides an $N$-extremal solution for the Al-Salam--Carlitz II moment problem, since $\{V_n^{(c/d)}(c\,\cdot\,;q)\}_{n \in \N}$ forms a basis for $H$. This solution is first obtained by Al-Salam and Carlitz in \cite{ASC}. It is proved by Chihara in \cite{Ch}, and later by Berg and Valent in \cite{BV}, that the solution is $N$-extremal.
\end{remark}

\end{document}